\let\normalrender\PdfRender@NormalColorHook
\let\PdfRender@NormalColorHook\@empty
\g@addto@macro\normalsize{%
  \setlength\abovedisplayskip{7pt}
  \setlength\belowdisplayskip{7pt}
  \setlength\abovedisplayshortskip{7pt}
  \setlength\belowdisplayshortskip{7pt}
}
\setlist{nolistsep} 	
\titlespacing*{\section}{0pt}{3.5ex plus 0ex minus 0ex}{1.5ex plus 0ex}
\titlespacing*{\subsection}{0pt}{3.5ex plus 0ex minus 0ex}{1.5ex plus 0ex}
\titlespacing*{\subsubsection}{0pt}{3.5ex plus 0ex minus 0ex}{1.5ex plus 0ex}
\renewenvironment{abstract}{
\begin{center}
{\bfseries \large\abstractname\vspace{\z@}}
\end{center}
\quotation
}
\newtheoremstyle{plain}{3mm}{3mm}{\slshape}{}{\bfseries}{.}{.5em}{}
\newtheoremstyle{claim}{3mm}{3mm}{}{}{\itshape}{.}{.5em}{}
\newtheoremstyle{definition}{2mm}{2mm}{}{}{\bfseries}{.}{.5em}{}
\theoremstyle{plain}
\newtheorem{Theorem}{Theorem}
\newtheorem{lemma}[Theorem]{Lemma}
\newtheorem{Proposition}[Theorem]{Proposition}
\newtheorem{proposition}[Theorem]{Proposition}
\newtheorem{Corollary}[Theorem]{Corollary}
\newtheorem{Conjecture}[Theorem]{Conjecture}
\theoremstyle{claim}
\theoremstyle{definition}
\newtheorem{Definition}[Theorem]{Definition}
\theoremstyle{plain}
\newcounter{MainTheoremCounter}
\theoremstyle{plain}
\newtheorem*{namedthm}{\namedthmname}
\newcounter{namedthm}
\numberwithin{equation}{section}
\newcommand{\Cech}{\v{C}ech}
\newcommand{\Erdos}{Erd\H{o}s}
\newcommand{\Folner}{F\o{}lner}
\newcommand{\Szemeredi}{Szemer\'{e}di}
\newcommand{\N}{\mathbb{N}}
\newcommand{\Z}{\mathbb{Z}}
\newcommand{\R}{\mathbb{R}}
\newcommand{\Hau}{\mathsf{H}}
\newcommand{\define}[1]{{\itshape #1}}
\renewcommand{\epsilon}{\varepsilon}
\renewcommand{\leq}{\leqslant}
\renewcommand{\geq}{\geqslant}
\renewcommand{\setminus}{\backslash}
\renewcommand{\emptyset}{\varnothing}
\newcommand{\F}{\mathcal{F}}
\newcommand{\one}{\boldsymbol{1}}
\newcommand{\gen}{\mathsf{gen}}
\newcommand{\supp}{\mathsf{supp}}
\renewcommand{\d}{\,\mathsf{d}}
\newcommand{\intd}{\,\mathsf{d}}
\newcommand{\M}{\mathcal{M}}
\definecolor{ggreen}{RGB}{0,200,0}
\definecolor{rred}{RGB}{150,0,70}
\definecolor{yyellow}{RGB}{250,210,0}
\author{By~~{\scshape Bryna Kra}~~and~~{\scshape Joel~Moreira}~~and~~{\scshape Florian~K.~Richter}\\~~and~~{\scshape Donald Robertson}}
\date{\small \today}
\title{\textbf{A proof of \Erdos{}'s $B+B+t$ conjecture}}
\begin{document}

\maketitle

\begin{abstract}
We show that every set $A$ of natural numbers with positive upper density can be shifted to contain the restricted sumset
$\{b_1 + b_2 : b_1, b_2\in B \text{ and } b_1 \ne b_2 \}$ for some infinite set $B \subset A$.
\end{abstract}


\section{Introduction}
Looking to connect two major achievements of additive combinatorics from the 1970s, Hindman's theorem~\cite{Hindman-1974} and \Szemeredi{}'s theorem~\cite{S},
\Erdos{} formulated the following conjecture on multiple occasions.

\begin{Conjecture}[\Erdos{}~{\cite[Page~305]{erdos-1975},  \cite[Pages 57--58]{erdos-1977}, and~\cite[Page 105]{erdos-1980}}]
\label{conj_main}
For any $A\subset\N$ with positive density there exists an infinite set $B\subset A$ and a number $t\in\N$ such that \[A-t\supset \{b_1+b_2: b_1, b_2\in B \text{ and } b_1\neq b_2\}.\]
\end{Conjecture}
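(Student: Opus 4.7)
The plan is to translate \cref{conj_main} into an ergodic-theoretic problem through Furstenberg's correspondence principle, and then to construct the infinite set $B$ inductively while maintaining enough structural control --- via the Kronecker factor of the associated measure-preserving system --- that the construction can be continued at every stage.

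First I would apply Furstenberg correspondence to associate to $A$ a probability measure-preserving system $\xbmt$, a set $E\in\mathcal{B}$ with $\mu(E)=\bar{\dens}(A)$, and a generic point $x_0\in X$ whose return times to $E$ record the membership $n\in A$. The conjecture then becomes the statement that there exist $t\in\N$, a point $x$ in the orbit-closure of $x_0$, and an infinite $B\subset A$ with $T^{b}x\in E$ for every $b\in B$ and $T^{b_1+b_2+t}x\in E$ for all distinct $b_1,b_2\in B$. Next I would invoke the structure theory of measure-preserving systems to decompose $\one_E=f_{\mathrm{s}}+f_{\mathrm{u}}$, where $f_{\mathrm{s}}$ is measurable with respect to the Kronecker factor --- which we may identify with a rotation $R_\alpha$ on a compact abelian group $Z$ --- and $f_{\mathrm{u}}$ is orthogonal to it. For the nonconventional averages attached to the configuration $(n,\,n+m+t)$ the weakly mixing part $f_{\mathrm{u}}$ washes out, reducing the problem to a statement about visits of the rotation orbit $n\alpha$ to positive-measure sets in $Z$; the shift $t$ enters precisely to correct the coset obstruction that makes \cref{conj_main} fail when $A\subset 2\N+1$ and $t=0$.

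The construction of $B=\{b_1<b_2<\cdots\}$ then proceeds by induction. Having chosen $b_1,\dots,b_k$, I would maintain a ``reservoir'' $R_k\subset A$ of positive upper density with the property that $T^n x\in E$ and $T^{b_i+n+t}x\in E$ hold for every $n\in R_k$ and every $i\leq k$; the next element $b_{k+1}$ is then any element of $R_k$, and the reservoir is updated by intersecting with the return set to $E$ under $T^{-(b_{k+1}+t)}$. A recurrence estimate on the structured factor $Z$, which is just a compact abelian rotation, would then show the updated reservoir still has positive density.

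The principal obstacle is showing that the reservoirs do not collapse across infinitely many stages: a naive intersection argument loses a multiplicative constant at each step, which is fatal in the limit. To defeat this, one must package the entire future of the construction into a single limiting device --- the natural candidate is a limit along an idempotent ultrafilter in the spirit of Hindman's theorem or, equivalently, an IP-convergence argument \`a la Bergelson--McCutcheon --- so that recurrence to $E$ along every finite sum $b_i+b_j+t$ is enforced \emph{simultaneously} by the group-rotation structure on $Z$. Orchestrating this simultaneous recurrence, while ensuring that $B$ actually sits inside $A$ and that the weakly mixing error term does not overwhelm the main term at any stage, is the technical crux I expect to require the most effort.
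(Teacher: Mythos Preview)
Your setup via Furstenberg correspondence and the identification of the Kronecker factor as the characteristic structure are correct and match the paper. But the proposal breaks down at exactly the point you flag as the crux, and the fix you gesture toward does not work.

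The concrete obstruction to the IP/idempotent-ultrafilter route: run in the natural way, such an argument would produce a set $B$ whose \emph{all} finite sums $\sum_{i\in F}b_i$ land in $A-t$, not merely the pairwise sums. The Straus example (referenced in the paper after the final conjecture) shows this is impossible for general $A$ of positive density. So any IP-based argument would have to be calibrated to stop at sums of two elements, and you give no mechanism for that. Independently, the reservoir densities really can decay to zero --- there is no \Poincare{}-type lower bound that is uniform over an infinite tower of intersections --- and passing to an ultrafilter limit does not by itself arrest the decay. Your proposal therefore has a genuine gap: it names the difficulty but does not supply an idea that overcomes it.

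What the paper does instead is replace the reservoir bookkeeping by a single limiting object found in advance. It introduces the notion of a $3$-term \Erdos{} progression: a triple $(a,x_1,x_2)$ for which $(T\times T)^{n_i}(a,x_1)\to(x_1,x_2)$ along some sequence. Once such a progression exists with $x_1\in E$ and $x_2\in T^{-t}E$, a short topological induction (no densities, hence no loss) manufactures $B$. The substance of the paper is the construction of a measure $\sigma_a$ on $X\times X$, built from the disintegration of $\mu$ over the Kronecker factor as $\sigma_a=\int_Z\eta_{\pi(a)+z}\times\eta_{\pi(a)+2z}\,\d m(z)$, and the proof that $\sigma_a$-almost every $(x_1,x_2)$ makes $(a,x_1,x_2)$ an \Erdos{} progression; one then checks $\sigma_a\bigl(E\times\bigcup_{t}T^{-t}E\bigr)>0$. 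Continuity of $s\mapsto\sigma_s$ (which requires a \emph{continuous} factor map to the Kronecker, arranged by passing to an extension) and a density-point argument in $Z$ are the technical ingredients. This measure on progressions is the idea your plan is missing.
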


This problem was studied by various authors, including Nathanson~\cite{Nathanson}, Kazhdan (see~\cite{Nathanson, Hindman-1979}), and Hindman~\cite[Section 11]{Hindman-1979}.
The latter provided several equivalent forms, including a natural reformulation using the Stone-\Cech{} compactification of the integers.
A special case of \cref{conj_main}, also conjectured by \Erdos{}, was resolved in \cite{MRR}, asserting that, under the same assumptions, $A$ contains a sumset $B+C=\{b+c:b\in B,c\in C\}$ of two infinite sets $B,C\subset\N$.
Further recent progress in this direction has been made in~\cite{DGJLLM15, Host, KMRR}, and further history on \cref{conj_main} and surrounding problems can be found in~\cite{Hindman-1979,MRR,Nathanson}.

Our main theorem resolves \cref{conj_main}.
To state our result precisely, recall that a \define{\Folner{} sequence} $\Phi$ on $\N$ is any sequence $N \mapsto \Phi_N$ of finite subsets of $\N$ with the property that
\[
\lim_{N \to \infty} \dfrac{|\Phi_N \cap (\Phi_N + t)|}{|\Phi_N|} = 1
\]
for all $t \in \N$. A set $A \subset \N$ has \define{positive upper Banach density} if
\[
\lim_{N \to \infty} \dfrac{|A \cap \Phi_N|}{|\Phi_N|} > 0
\]
for some \Folner{} sequence $\Phi$.

\begin{Theorem}
\label{thm_main}
For any $A\subset\N$ with positive upper Banach density, the following hold:
\begin{enumerate}
[label=(\roman{enumi}),ref=(\roman{enumi}),leftmargin=*]
\item\label{itm_main_thm_i}
There exist an infinite set $B\subset A$ and a shift $t\in\N$ such that
\[
\{b_1+b_2: b_1, b_2\in B \text{ and } b_1\neq b_2\}\subset A-t.
\]
\item\label{itm_main_thm_ii}
There exist an infinite set $B\subset \N$ and a shift $t\in\N$ such that
\[
B\cup
\{b_1+b_2: b_1, b_2\in B \text{ and } b_1\neq b_2\}\subset A-t.
\]
\end{enumerate}
\end{Theorem}

Note that in the formulation of \cref{thm_main} it is not possible to omit the shift by $t$ or remove the condition $b_1 \ne b_2$ in either conclusion (see the discussion in~\cite{MRR} after Question~6.2).
Also, it was observed by Hindman in \cite{Hindman-1979} that writing $t=2r+s$ for $r\in\N$ and $s\in\{0,1\}$ and replacing $B$ by $B-r$, one obtains the following corollary from \cref{thm_main}.

\begin{Corollary}
For any set $A$ of even integers with positive upper Banach density there exists an infinite set $B\subset\N$ such that $A\supset  \{b_1+b_2: b_1, b_2\in B \text{ and } b_1\neq b_2\}.$
\end{Corollary}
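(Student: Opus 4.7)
My plan is to apply \cref{thm_main} directly to $A$ and then exploit the hypothesis $A\subset 2\N$ to absorb the shift $t$ into a translate of the witnessing set. First I would invoke \cref{thm_main} to produce an infinite set $B_0\subset A$ together with a shift $t\in\N$ satisfying $A-t\supset \{b_1+b_2: b_1,b_2\in B_0,\ b_1\neq b_2\}$.

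Next I would analyse parities. Since $B_0\subset A\subset 2\N$, every element of $B_0$ is even, and therefore $b_1+b_2$ is even for all distinct $b_1,b_2\in B_0$. The inclusion $b_1+b_2+t\in A\subset 2\N$ then forces $t$ itself to be even, so I can write $t=2r$ for some $r\geq 0$. Setting $B:=B_0+r$, this is again an infinite subset of $\N$, and for any two distinct $c_1,c_2\in B$ one has
\[
c_1+c_2=(b_1+r)+(b_2+r)=b_1+b_2+t\in A,
\]
which is precisely the containment the corollary asks for.

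There is no substantive obstacle here: the whole argument is a direct deduction from \cref{thm_main}, and the only subtlety is the short parity check that rules out the case of odd $t$. This is essentially the reduction that Hindman describes in the paragraph preceding the statement, with the value $s=0$ forced by the parity consideration above and the translation $B_0\mapsto B_0+r$ playing the role of Hindman's translation $B\mapsto B-r$.
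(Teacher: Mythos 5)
Your proof is correct and is essentially the paper's own deduction (stated there in one sentence and attributed to Hindman): apply \cref{thm_main}, write $t=2r+s$ with $s\in\{0,1\}$, and translate the witnessing set by $r$, with your parity check simply making explicit why $s$ must vanish when $A\subset 2\N$. No further changes are needed.
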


Our proof of \cref{thm_main} uses ergodic theory and builds on the new dynamical methods developed in~\cite{KMRR}
to find infinite patterns in sets with positive upper density.
To formulate our main dynamical result we recall some basic terminology.
By a \define{topological system}, we mean a pair $(X,T)$ where $X$ is a compact metric space and $T\colon  X \to X$ is a homeomorphism.
A \define{system} is a triple $(X,\mu,T)$, where $(X,T)$ is a topological system and $\mu$ is a $T$-invariant Borel probability measure on $X$.
The system is \define{ergodic} if any $T$-invariant Borel subset of $X$ has either measure $0$ or measure $1$, and equivalently we say that $\mu$ is ergodic for $T$.
Given a system $(X,\mu,T)$, a point $a \in X$ is \define{generic} for $\mu$ along a \Folner{} sequence $\Phi$, written $a \in \gen(\mu,\Phi)$, if
\[
\mu=\lim_{N\to\infty}\frac1{|\Phi_N|}\sum_{n\in\Phi_N}\delta_{T^na}
\]
where $\delta_x$ is the Dirac measure at $x \in X$ and the limit is in the weak* topology.
This allows us to formulate a dynamical result equivalent to \cref{thm_main}.

\begin{Theorem}
\label{thm_maindynamical}
Let $(X,\mu,T)$ be an ergodic system, let $a\in\gen(\mu,\Phi)$ for some \Folner{} sequence $\Phi$, and let $E\subset X$ be an open set with $\mu(E)>0$.
\begin{enumerate}
[label=(\roman{enumi}),ref=(\roman{enumi}),leftmargin=*]
\item\label{itm_maindynamical_i}
There exist $x_1, x_2\in X$, $t\in\N$, and a strictly increasing sequence $n_1<n_2<\ldots$ of integers such that $x_1\in E$, $T^tx_2\in E$, and $(T\times T)^{n_i}(a, x_1)\to (x_1, x_2)$ as $i\to\infty$.
\item\label{itm_maindynamical_ii}
There exist $x_1, x_2\in X$, $t\in\N$, and a strictly increasing sequence $n_1<n_2<\ldots$ of integers such that $(T\times T)^t (x_1,x_2)\in E\times E$ and $(T\times T)^{n_i}(a, x_1)\to (x_1, x_2)$ as $i\to\infty$.
\end{enumerate}
\end{Theorem}

A proof of the equivalence between
\cref{thm_main} and \cref{thm_maindynamical} is given in \cref{sec_dyn_reduction}, and the proof of \cref{thm_maindynamical} is given in \cref{sec_main_dynamical}.
For a comparison between the techniques in~\cite{KMRR} and this paper, and an outline of how the new difficulties arising are overcome, see \cref{sec:outline-of-proof}.

We conclude the introduction with a natural conjecture on a higher order version of our main theorem.

\begin{Conjecture}
Let $A\subset\N$ have positive upper Banach density and let $k\in\N$.
Then there exist an infinite set $B\subset \N$ and a shift $t\in\N$ such that
\begin{equation}\label{eq_conjecure}
A-t\supset \left\{\sum_{n\in F}n:F\subset B,\ 0<|F|< k\right\}.
\end{equation}
\end{Conjecture}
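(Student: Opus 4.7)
The strategy is to lift \cref{thm_maindynamical} to a tower of simultaneous returns and then to extract $B$ greedily. Realize $A$ as an open set $E\subseteq X$ inside a topological system $(X,T)$ carrying an ergodic invariant measure $\mu$ for which $a$ is generic along $\Phi$. The plan is to show that, for every $k$, there exist a shift $t\in\N$, points $x_1,\dots,x_{k-1}\in X$ with $x_1\in E$ and $T^tx_\ell\in E$ for every $\ell=1,\dots,k-1$, and a strictly increasing sequence $n_1<n_2<\cdots$ along which
\[
(T\times T\times\cdots\times T)^{n_i}(a,x_1,x_2,\dots,x_{k-2})\to (x_1,x_2,\dots,x_{k-1})
\]
as $i\to\infty$. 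The significance of this tower is that, by composing iterates, if distinct $b_{i_1},\dots,b_{i_r}$ are drawn from the $n_j$'s and inserted one at a time, then $T^{b_{i_1}+\cdots+b_{i_r}}a$ can be pushed arbitrarily close to $x_r$; the requirement $T^tx_r\in E$ then translates directly into $b_{i_1}+\cdots+b_{i_r}\in A-t$, which is exactly the containment claimed in the conjecture for subsets of cardinality $r<k$.

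Granting this dynamical input, the combinatorial deduction mimics \cref{sec_dyn_reduction}. Build $B=\{b_1<b_2<\cdots\}$ inductively: at stage $j+1$, the finitely many subsets $F\subseteq\{1,\dots,j\}$ with $|F|<k-1$ each impose an open condition on the next choice---namely that $T^{b_{j+1}+\sum_{i\in F}b_i}a$ lie in a prescribed neighbourhood of $x_{|F|+1}$---together with the conditions $T^{b_{j+1}}a\in E$ (so $b_{j+1}\in A$) and $T^{t+b_{j+1}}a\in E$ (so $b_{j+1}\in A-t$). The joint convergence along $n_i$ and the genericity of $a$ should together guarantee infinitely many $n_i$'s satisfying these finitely many open conditions simultaneously, from which $b_{j+1}$ is drawn. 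In particular, $t$ is fixed once and for all at the outset and never has to be revisited.

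The serious difficulty lies in producing the higher-order dynamical statement. The proof of \cref{thm_maindynamical} dissects a single double average by extracting its characteristic factor, typically a Kronecker or low-step nilfactor, and arguing that the residual contribution is uniform. For general $k$ one must jointly control a $(k-1)$-fold correlation
\[
\frac{1}{|\Phi_N|}\sum_{n\in\Phi_N}f_0(T^na)\,f_1(T^nx_1)\cdots f_{k-1}(T^nx_{k-1}),
\]
whose characteristic factors, by the work of Host--Kra and Ziegler, are nilfactors of step growing with $k$, and one must moreover find a \emph{single} $t$ that positions every $x_\ell$ inside $E$ after shifting. I would expect either an induction on $k$, in which the tower is extended by one level at a time using the already-constructed $x_\ell$'s as new base points, or an ultrafilter/IP-style formulation in which the diagonal convergences $T^{n_i}x_{\ell-1}\to x_\ell$ are enforced simultaneously at all levels. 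Arranging that distinct $b_j$'s meeting all $2^{k-1}-1$ open conditions can always be found, while also securing the common shift $t$, appears to be the main technical obstacle.
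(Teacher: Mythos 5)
The statement you are trying to prove is not a theorem of the paper: it is posed there as an open conjecture (the paper proves only the case corresponding to sums of two distinct elements, via \cref{thm_maindynamical} and \cref{thm_erdos-triangle_gives_sumset}), and no proof is offered. Your proposal does not close that gap. The combinatorial half of your plan --- extracting $B$ greedily from a ``tower'' of simultaneous returns $(T\times\cdots\times T)^{n_i}(a,x_1,\dots,x_{k-2})\to(x_1,\dots,x_{k-1})$ with $x_1\in E$ and $T^t x_\ell\in E$ for all $\ell$ --- is indeed the natural generalization of the induction in \cref{thm_erdos-triangle_gives_sumset}, though as written it is loose: the conditions should be maintained on the fixed points $a,x_1,\dots,x_{k-2}$ (of the form $x_\ell\in\bigcap_F T^{-\sum_{i\in F}b_i-t}E$ for the admissible $F$), not on ``prescribed neighbourhoods'' whose size you never quantify; otherwise nothing guarantees that later iterates of points merely \emph{near} $x_{|F|}$ still land where you need them.

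The essential content, however, is the higher-order dynamical statement, and you explicitly leave it unproved --- which is to say you leave open exactly what is open. The paper's proof of \cref{thm_maindynamical} is not a soft argument that formally extends: it relies on the special structure of $3$-term progressions in the Kronecker factor (a continuous factor map to a compact abelian group, the explicit measures $\sigma_s$ and $\lambda_{(x_1,x_2)}$ built from the Haar measure, and genericity of $(a,x_1)$ for $\lambda_{(a,x_1)}$ from the companion paper \cite{KMRR}). For $k\geq 4$ the relevant correlations are governed by higher-step nilfactors (Host--Kra), so one would need continuous factor maps onto nilfactors, an analogue of $\sigma_s$ supported on higher-length Erd\H{o}s progressions, and a replacement for \cref{lemma_optimistic} --- none of which you construct. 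There is also a genuine issue you underestimate in securing the single shift $t$: in the paper one only needs $\sigma_a(E\times\bigcup_t T^{-t}E)>0$, and ergodicity makes $\bigcup_t T^{-t}E$ conull so the computation reduces to $\mu(E)>0$; in your setting you need one common $t$ with $\sigma$-positive mass on $E\times T^{-t}E\times\cdots\times T^{-t}E$, and the union-over-$t$ trick does not pass through an intersection across $k-1$ coordinates without a further argument. In short: the reduction from the conjecture to a higher-order progression statement is reasonable and parallels Section~2, but the proposal proves neither that statement nor the conjecture.
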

We remark that an example of Straus answering an earlier question of \Erdos{} (see~\cite[Theorem 2.2]{BBHS} and~\cite[Theorem 11.6]{Hindman-1979}) shows that $k$ can not be replaced by infinity in \eqref{eq_conjecure}.

\paragraph{Acknowledgements:}
BK acknowledges National
Science Foundation grant DMS-2054643, JM and FKR thank the organizers of the conference ``Ultramath2022'' during which part of this project was completed, and DR acknowledges EPSRC grant V050362.
We thank the anonymous referee for helpful suggestions and comments.

\section{Reduction to a dynamical statement}
\label{sec_dyn_reduction}
In this section we show the equivalence between Theorems \ref{thm_main} and \ref{thm_maindynamical}, beginning with the easier implication.
\begin{proof}[Proof that \cref{thm_main} implies \cref{thm_maindynamical}]
We prove that part~\ref{itm_main_thm_i} of \cref{thm_main} implies part~\ref{itm_maindynamical_i} of \cref{thm_maindynamical}; the same proof with obvious modifications shows that part~\ref{itm_main_thm_ii} of \cref{thm_main} implies part~\ref{itm_maindynamical_ii} of \cref{thm_maindynamical}.

    Let $(X,\mu,T)$ be an ergodic system, let $a\in\gen(\mu,\Phi)$ for some \Folner{} sequence $\Phi$, and let $E\subset X$ be an open set with $\mu(E)>0$.
    Since $E$ is open, there exists some point $y\in E$ that lies in the support of $\mu$.
    Let $U$ be an open ball centered at $y$ (and so $\mu(U)>0$) whose closure is contained in $E$.
    Since $a\in\gen(\mu,\Phi)$, the set $A:=\{n\in\N:T^na\in U\}$ has positive upper Banach density.
    Part \ref{itm_main_thm_i} of \cref{thm_main} then implies that $A\supset \{b_1+b_2:b_1,b_2\in B, b_1\neq b_2\}+t$ for some infinite set $B\subset A$ and some $t\in\N$.
Compactness of $X$ yields an increasing sequence $(n_i)_{i\in\N}$, taking values in $B$ for which $x_1:=\lim_{i\to\infty}T^{n_i}a$ exists.
Passing to a subsequence of $(n_i)_{i\in\N}$ if needed, the limit $x_2:=\lim_{i\to\infty}T^{n_i}x_1$ also exists.
Since $B\subset A$, it follows that $x_1\in \overline{U}\subset E$. Since $\{b_1+b_2:b_1,b_2\in B, b_1\neq b_2\}+t\subset A$, we have that $T^tx_2\in E$ as well.
\end{proof}

When the set $A$ in \cref{thm_main} is of the form
\[
A=\{ n \in \N : \|\theta + n \alpha \|_{\R / \Z} < \epsilon \}
\]
for some $\epsilon > 0$ (or, more generally, is a Bohr set),
the existence of a set $B \subset \N$ satisfying the conclusion of \cref{thm_main} is connected to the behavior of $3$-term arithmetic progressions $\theta,\theta+\beta,\theta+2\beta$ in $\R/\Z$ (or, more generally, in the underlying group).
For arbitrary $A \subset \N$ we bridge the gap between the combinatorial statement \cref{thm_main} and the dynamical statement \cref{thm_maindynamical} using a dynamical variant of $3$-term progressions defined as follows.

\begin{Definition}
Given a topological system $(X,T)$, a point $(x_0,x_1,x_2)\in X^3$ is called a \emph{(3-term) \Erdos{} progression} if there exists a strictly increasing sequence $n_1<n_2<\cdots$ of integers such that $(T\times T)^{n_i}(x_0, x_1)\to (x_1, x_2)$ as $i\to\infty$.
\end{Definition}

The role played in this paper by \Erdos{} progressions parallels the role played by \Erdos{} cubes in~\cite{HK-05}.
Various other notions of dynamical progressions, for example those in~\cite{Furstenberg-1977, HK-05, GHSY}, have already been used for related questions, but the one we use does not seem to have been defined previously.
We remark that in group rotations all the notions of dynamical progressions agree with the conventional notion of arithmetic progression.

The next result completes the translation between ergodic theory and combinatorics by connecting \Erdos{} progressions and sumsets.

\begin{Theorem}
\label{thm_erdos-triangle_gives_sumset}
Fix a topological system $(X,T)$ and open sets $U,V \subset X$.
If there exists an \Erdos{} progression $(x_0,x_1,x_2) \in X^3$ with $x_1\in U$ and $x_2 \in V$, then
there exists some infinite set $B \subset \{ n \in \N : T^n x_0 \in U \}$ such that $\{ b_1 + b_2 : b_1,b_2 \in B, b_1 \ne b_2 \}$ is a subset of $\{n \in \N : T^n x_0 \in V \}$.
\end{Theorem}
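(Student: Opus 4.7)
The plan is to construct the infinite set $B=\{b_1<b_2<\cdots\}$ inductively, drawing each $b_k$ from the sequence $(n_i)$ furnished by the hypothesis that $(x_0,x_1,x_2)$ is an \Erdos{} progression. The governing observation is the identity $T^{b_i+b_j}x_0=T^{b_i}(T^{b_j}x_0)$, so that once $b_j$ is large enough to place $T^{b_j}x_0$ near $x_1$, verifying $T^{b_i+b_j}x_0\in F$ reduces to controlling the image of a small neighborhood of $x_1$ under the already-fixed map $T^{b_i}$. A diagonal selection with shrinking tolerances should therefore suffice.

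Concretely, fix a metric $d$ on $X$ and pick $\delta_E,\delta_F>0$ so that the open $\delta_E$-ball around $x_1$ lies in $E$ and the open $\delta_F$-ball around $x_2$ lies in $F$. Choose $b_1=n_{i_1}$ so large that $d(T^{b_1}x_0,x_1)<\delta_E$ and $d(T^{b_1}x_1,x_2)<\delta_F/2$; this is possible because $T^{n_i}x_0\to x_1$ and $T^{n_i}x_1\to x_2$. Inductively, assume $b_1<\cdots<b_{k-1}$ have been selected. Since $X$ is compact metric, each iterate $T^{b_i}$ is uniformly continuous, so there is $\delta^*>0$ with $\delta^*\le\delta_E$ such that $d(y,x_1)<\delta^*$ implies $d(T^{b_i}y,T^{b_i}x_1)<\delta_F/2$ for every $i<k$. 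Choose $b_k=n_{i_k}>b_{k-1}$ with $d(T^{b_k}x_0,x_1)<\delta^*$ and $d(T^{b_k}x_1,x_2)<\delta_F/2$.

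To conclude, observe that $T^{b_k}x_0\in E$ for every $k$ since $d(T^{b_k}x_0,x_1)<\delta_E$. For any pair $i<j$, the triangle inequality gives
\[
d(T^{b_i+b_j}x_0,x_2)\le d(T^{b_i}(T^{b_j}x_0),T^{b_i}x_1)+d(T^{b_i}x_1,x_2)<\delta_F/2+\delta_F/2=\delta_F,
\]
so $T^{b_i+b_j}x_0\in F$. I do not anticipate a substantive obstacle: the only delicate point is that the modulus of continuity of $T^{b_i}$ depends on $i$, but this is accommodated by letting the tolerance $\delta^*$ shrink with $k$, which is harmless because the choice of $b_k$ remains unconstrained among the large indices of $(n_i)$.
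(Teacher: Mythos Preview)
Your proof is correct and follows essentially the same inductive construction as the paper's: both maintain the invariant that $T^{b_i}x_1$ lies in (or near) $F$ for all previously chosen $b_i$, and then at each new step push $T^{b_k}x_0$ close enough to $x_1$ that applying the already-fixed $T^{b_i}$ lands in $F$. The only cosmetic difference is that the paper works directly with the open neighborhood $\bigl(\bigcap_{i\le n}T^{-b(i)}F\bigr)\times F$ of $(x_1,x_2)$, whereas you unpack this into metric balls and an explicit uniform-continuity tolerance $\delta^*$; these are the same argument in different language.
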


\begin{proof}
Let $c\colon \N\to\N$ be a strictly increasing sequence such that $(T\times T)^{c(n)}(x_0,x_1)\to(x_1,x_2)$.
Since $U$ is a neighborhood of $x_1$, by refining the sequence $c(n)$ we can assume without loss of generality that $\{c(n): n\in\N\}\subset \{n \in \N : T^n x_0 \in U\}$.

We now construct the set $B\subset \{c(n): n\in\N\}$ inductively.
First choose $b(1)$ in $\{c(i) : i \in \N \}$ with $T^{b(1)} x_1 \in V$.
Note that with this choice of $b(1)$ the set $(T^{-b(1)} V )\times V$ is a neighborhood of $(x_1,x_2)$.
Next, choose $b(2)$ in $\{c(i) : i \in \N \}$ with $b(2) > b(1)$ and
\[
(T \times T)^{b(2)} (x_0,x_1) \in \bigl(T^{-b(1)} V\bigr) \times V.
\]
It follows that $T^{b(1)+b(2)} x_0 \in V$ and $x_1\in T^{-b(2)} V \cap T^{-b(1)} V$.

Supposing that, by induction, we have found $b(1) < \cdots < b(n)\subset \{c(n): n\in\N\}$ with
\[
x_0 \in \bigcap_{1 \le i < j \le n} T^{-b(i) - b(j)} V
\qquad
\text{and}
\qquad
x_1 \in \bigcap_{1 \le i \le n} T^{-b(i)} V,
\]
we choose $b(n+1)\in\{c(i):i\in\N\}$ with $b(n+1) > b(n)$ and
\[
(T \times T)^{b(n+1)} (x_0,x_1) \in \biggl( \bigcap_{1 \le i \le n} T^{-b(i)} V\biggr) \times V.
\]
This is possible because
\[
\bigl( \bigcap_{1 \le i \le n} T^{-b(i)} V\bigr) \times V
\]
is a neighborhood of $(x_1,x_2)$ and $(T\times T)^{c(n)}(x_0,x_1)\to(x_1,x_2)$ as $n\to\infty$.
Together with the inductive hypothesis, this implies
\[
x_0 \in \bigcap_{1 \le i < j \le n+1} T^{- b(i) -b(j)} V
\qquad
\text{and}
\qquad
x_1 \in \bigcap_{1 \le i \le n+1} T^{- b(i)} V
\]
concluding the induction.
Taking $B = \{ b(i) : i \in \N \}$ finishes the proof.
\end{proof}

To deduce \cref{thm_main} from \cref{thm_maindynamical} using \cref{thm_erdos-triangle_gives_sumset}, we use the following version of the Furstenberg correspondence principle.

\begin{proposition}[{\cite[Theorem 2.10]{KMRR}}]\label{lemma_correspondence}
Given a set $A\subset\N$ with positive upper Banach density there exists an ergodic system $(X,\mu,T)$, a F\o lner sequence $\Phi$, a point $a\in\gen(\mu,\Phi)$, and a clopen set $E\subset X$ such that $\mu(E)>0$ and $A=\{n\in\N:T^na\in E\}$.
\end{proposition}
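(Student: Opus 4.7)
The plan is to build everything inside a symbolic dynamical system. First I would take $X = \{0,1\}^{\Z}$, let $T \colon X \to X$ be the shift, and let $a \in X$ be the two-sided indicator of $A \subset \N \subset \Z$ (with $0$'s at non-positive coordinates). The cylinder $E = \{x \in X : x(0) = 1\}$ is clopen, and by construction $\{n \in \N : T^n a \in E\} = A$, so two of the four conclusions are automatic from the choice of model.

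Next I would produce an invariant measure witnessing $\mu(E) > 0$. Since $A$ has positive upper Banach density, fix a \Folner{} sequence $\Psi$ on $\N$ with $\lim_{N \to \infty} |A \cap \Psi_N|/|\Psi_N| = \delta > 0$. By weak* compactness of Borel probability measures on $X$, after passing to a subsequence the empirical averages $\frac{1}{|\Psi_N|} \sum_{n \in \Psi_N} \delta_{T^n a}$ converge to some $\mu$; the \Folner{} property forces $\mu$ to be $T$-invariant, and continuity of $\one_E$ (thanks to $E$ being clopen) gives $\mu(E) = \delta > 0$. Thus $a \in \gen(\mu, \Psi')$ for the subsequential \Folner{} sequence $\Psi'$, though $\mu$ need not be ergodic.

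To upgrade to ergodicity I would invoke the ergodic decomposition $\mu = \int \mu_\omega \intd \lambda(\omega)$. Since $\mu(E) > 0$, the set of components $\omega$ with $\mu_\omega(E) > 0$ has positive $\lambda$-measure. The task is to select one such ergodic $\nu$ together with a \Folner{} sequence $\Phi$ for which $a \in \gen(\nu, \Phi)$. The idea is that quasi-genericity of $a$ for $\mu$ means that the orbit $(T^n a)_{n \in \Psi'_N}$ statistically samples the ergodic components in the proportions dictated by $\lambda$; by a pigeonhole argument in the weak* topology, within each $\Psi'_N$ one can locate long sub-windows on which the empirical distribution clusters near a single ergodic component. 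A diagonal extraction across $N$ should then yield a \Folner{} sequence $\Phi$ along which the empirical averages of $a$ converge to an ergodic $\nu$ with $\nu(E) > 0$.

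The main obstacle is precisely this last step: producing, from a point that is only quasi-generic for a non-ergodic measure, a \Folner{} sequence making it generic for a single ergodic component. The pointwise ergodic theorem only guarantees this property for $\mu$-almost every point, whereas $a$ is singled out by the combinatorial data of $A$ and is a priori not $\mu$-typical. One must genuinely exploit both that $a \in \gen(\mu, \Psi')$ and that the distinguishing set $E$ is clopen, so that weak* convergence can be tested against $\one_E$ without approximation errors. Once $\nu$ and $\Phi$ have been extracted, the ergodic system $(X, \nu, T)$ together with the generic point $a \in \gen(\nu, \Phi)$ and the clopen set $E$ satisfies all four requirements of the proposition.
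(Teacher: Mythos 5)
Your first two steps are fine and are exactly the classical Furstenberg correspondence: the shift on $\{0,1\}^{\Z}$ with $a=\one_A$, the clopen cylinder $E$, a weak* limit of empirical averages along a density-attaining \Folner{} sequence, giving an invariant (not necessarily ergodic) $\mu$ with $a$ quasi-generic and $\mu(E)>0$ because $\one_E$ is continuous. But the proposition is not proved by this: its entire content beyond the classical correspondence principle is the ergodicity upgrade, i.e.\ producing a single \emph{ergodic} measure $\nu$ with $\nu(E)>0$ together with a \Folner{} sequence $\Phi$ along which the \emph{same} point $a$ is genuinely generic. You identify this as ``the main obstacle'' and then assume it can be done (``Once $\nu$ and $\Phi$ have been extracted\dots''), so the argument has an acknowledged gap precisely at the step that matters. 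The paper itself does not reprove this statement; it cites \cite[Theorem 2.10]{KMRR}, where this upgrade is the nontrivial technical point.

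Moreover, the sketch you offer for the missing step is not merely unfinished but doubtful as stated. The pointwise ergodic theorem is unavailable because $a$ need not be $\mu$-typical, as you note; but the proposed remedy --- pigeonholing sub-windows of $\Psi'_N$ whose empirical measures ``cluster near a single ergodic component'' and diagonalizing --- does not obviously work: empirical measures over long sub-windows of a fixed orbit only have invariant weak* limit points in general, and for a non-typical point they may stay far from every ergodic measure (the ergodic measures need not even form a closed set), so no pigeonhole in the weak* topology by itself locates windows converging to an ergodic component. Even granting convergence to some ergodic $\nu$, you have no mechanism forcing $\nu(E)>0$: the windows you can control may be ones on which $A$ has density $0$. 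Closing this requires a genuine argument --- for instance the kind of analysis of quasi-generic points and their ergodic decompositions carried out in \cite{KMRR} --- not a compactness-plus-diagonalization routine, and without it the proposition remains unproved.
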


\begin{proof}[Proof that \cref{thm_maindynamical} implies  \cref{thm_main}]

Suppose $A\subset\N$ has positive upper Banach density.
Invoking \cref{lemma_correspondence} we find an ergodic system $(X,\mu,T)$, a point $a\in \gen(\mu, \Phi)$, a \Folner{} sequence $\Phi$, and a clopen set $E\subset X$ such that $\mu(E)>0$ and $A=\{n\in\N:T^na\in E\}$.
Using \cref{thm_maindynamical}, part~\ref{itm_maindynamical_i},  we can find $t\in\N$ and an \Erdos{} progression of the form $(a,x_1,x_2)\in X^3$ such that $x_1\in E$ and $x_2\in T^{-t}E$.
It now follows from \cref{thm_erdos-triangle_gives_sumset}, applied with $U=E$ and $V=T^{-t}E$, that there exists an infinite set $B\subset\{n\in\N:T^na\in E\}=A$ such that
\[
A-t=\{n\in\N: T^n a\in T^{-t}E\}\supset \{b_1+b_2:b_1,b_2\in B,b_1\neq b_2\},
\]
completing the proof of \cref{thm_main}, part~\ref{itm_main_thm_i}.
If we invoke part~\ref{itm_maindynamical_ii} of \cref{thm_maindynamical} instead, then the same argument, but using  \cref{thm_erdos-triangle_gives_sumset} applied with $U=V=T^{-t}E$, yields \cref{thm_main}, part~\ref{itm_main_thm_ii}.

\end{proof}

\section{Proof of the dynamical statement ({\cref{thm_maindynamical}})}
\label{sec_main_dynamical}

\subsection{Outline of the proof}
\label{sec:outline-of-proof}
Our first observation is that if $(x_0,x_1)$ is generic for a $(T \times T)$-invariant measure on $X\times X$ and $(x_1,x_2)$ is in the support of that measure, then $(x_0,x_1,x_2)$ forms an \Erdos{} progression.
One may be tempted, then, to find pairs having these properties with respect to the product measure $\mu\times \mu$ on $X\times X$.
However, it may be the case that the product measure is not $T \times T$-ergodic, and so typical pairs may only be generic for one of the ergodic components of $\mu \times \mu$.
This possibility leads us to consider the ergodic decomposition of $\mu \times \mu$.

We use the notation $(x_1,x_2)\mapsto\lambda_{(x_1,x_2)}$ to denote an ergodic decomposition of $(X\times X,\mu\times\mu,T\times T)$.
Since we aim to produce an \Erdos{} progression with prescribed first coordinate $a$, as in~\cite{KMRR} we make use of a decomposition $\lambda_{(x_1,x_2)}$ that is defined for \textit{every} pair $(x_1,x_2)$ and continuous as a function of $(x_1,x_2)$.
We show in the next section that, without loss of generality, we may assume that our system admits such a decomposition, referred to as a \textit{continuous ergodic decomposition}.

To find an \Erdos{} progression $(a,x_1,x_2)\in X\times X\times X$,  it then suffices to find a pair $(x_1,x_2)$ with all of the following properties.
\begin{enumerate}
\item
The pair $(a,x_1)\in X\times X$ is generic for the measure $\lambda_{(a,x_1)}$.
\item
The pair $(x_1,x_2)\in X\times X$ lies in the support of $\lambda_{(x_1,x_2)}$.
\item
The measures $\lambda_{(a,x_1)}$ and $\lambda_{(x_1,x_2)}$ are equal.
\end{enumerate}
While Properties~1 and~2 hold for $(\mu \times \mu)$-almost every point $(x_1,x_2)\in X\times X$, the explicit construction of a continuous ergodic decomposition in~\cite{KMRR} tells us that Property~3 holds if and only if the triple $(a,x_1,x_2)$ sits above a three-term progression in the \textit{Kronecker factor} of $(X,\mu,T)$. Since $a$ is fixed, this constitutes a set of zero measure with respect to $\mu \times \mu$ whenever the Kronecker factor is not finite.

We must therefore show that Properties~1 and~2 hold within the set of points $(x_1,x_2)$ for which $(a,x_1,x_2)$ projects onto a three-term progression in the Kronecker factor.
To that end, we introduce a natural measure $\sigma$ on $X\times X$ giving full measure to the set of points $(x_1,x_2)$ such that $(a,x_1,x_2)$ sits above a three-term progression. Thus $\sigma$ is a measure with the property that almost every pair $(x_1,x_2)\in X\times X$ satisfies Property~3.
Most of the work then goes into showing that the first two properties hold for $\sigma$-almost every pair $(x_1,x_2)$.  This is where the present work diverges from~\cite{KMRR}. To establish the properties we want, it is necessary to understand in greater detail than~\cite{KMRR} the disintegration of $\mu$ over the Kronecker factor.

\subsection{Using continuous factor maps}
\label{sec:factors}

Throughout this section, we make use of two types of factor maps from a system $(X,\mu,T)$ to another system $(Y,\nu,S)$.
\begin{itemize}
\item
\emph{Measurable factor maps}: a measurable function $\pi\colon X\to Y$ such that $\pi(\mu)=\nu$ and $\pi\circ T=S\circ\pi$ $\mu$-almost everywhere.
\item
\emph{Continuous factor maps}: a continuous surjection $\pi\colon X\to Y$ that such that $\pi(\mu) = \nu$ and $\pi \circ T = S \circ \pi$ everywhere.
\end{itemize}
If there exists a measurable factor map $\pi\colon X\to Y$, then $(Y,\nu,S)$ is called a \define{factor} of $(X,\mu,T)$.

In his proof of \Szemeredi{}'s theorem, Furstenberg~\cite{Furstenberg-1977} shows that in order to understand the behavior of $3$-term dynamical progressions, it suffices to consider their projections onto the maximal group rotation factor.  We use an analogous method to study  $3$-term \Erdos{} progressions.

A \emph{group rotation} is a system of the form $(Z,m,R)$, where $Z$ is a compact abelian group, $m$ is the Haar measure on $Z$, and $R\colon Z\to Z$ is a rotation of the form $R(z) = z+\alpha$ for a fixed element $\alpha\in Z$.
Whenever $(Z,m,R)$ is a group rotation, we assume that the metric on $Z$ is chosen such that $z \mapsto z + w$ is an isometry for all $w \in Z$.

Every ergodic system $(X,\mu,T)$ possesses a maximal
group rotation factor called its \define{Kronecker factor} (see {\cite[Section~3]{Furstenberg-book}}).
In general, the factor map from an ergodic system $(X,\mu,T)$ onto its Kronecker factor $(Z,m,R)$ is only a measurable factor map.
The next lemma, however, shows that in many situations one can assume without loss of generality that the factor map onto the Kronecker factor is continuous, and this is key in our proof of \cref{thm_maindynamical}.

\begin{Proposition}[{\cite[Proposition 3.20]{KMRR}}]
\label{lemma_continuouskroneckerenabler}
Let $(X,\mu,T)$ be an ergodic system and let $a\in\gen(\mu,\Phi)$ for some \Folner{} sequence $\Phi$.
Then there exists an ergodic system $(\tilde{X},\tilde{\mu},\tilde{T})$, a \Folner{} sequence $\Psi$, a point $\tilde{a} \in\tilde{X}$ and a continuous factor map $\tilde\pi\colon \tilde X\to X$ such that $\tilde\pi(\tilde a)=a$ and $\tilde{a} \in \gen(\tilde{\mu},\Psi)$ and $(\tilde X,\tilde\mu,\tilde T)$ has a continuous factor map to its Kronecker factor.
\end{Proposition}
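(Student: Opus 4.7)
My strategy is to topologically join $X$ with its measure-theoretic Kronecker factor $(Z,m,R)$, so that the Kronecker factor map becomes continuous, and then lift $a$ through a compactness argument. Let $\pi\colon X\to Z$ be the measurable Kronecker factor map, with $R(z)=z+\alpha$. Since $a\in\gen(\mu,\Phi)$ forces the $T$-orbit of $a$ into every $T$-invariant $\mu$-conull subset of $X$, the identity $\pi\circ T=R\circ\pi$ holds pointwise along the orbit of $a$, so setting $z_0:=\pi(a)$ we obtain $\pi(T^n a)=R^n z_0$ for every $n\in\Z$. The lifted point $\tilde a:=(a,z_0)\in X\times Z$ then has its entire $(T\times R)$-orbit contained in the graph of $\pi$.

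\textbf{Extraction and identification.} Let $\tilde X$ be the $(T\times R)$-orbit closure of $\tilde a$ in $X\times Z$, set $\tilde T:=(T\times R)|_{\tilde X}$, and let $\tilde\pi\colon\tilde X\to X$ be the first-coordinate projection. By weak-$\ast$ sequential compactness on $\tilde X$, pass to a subsequence $\Psi$ of $\Phi$ along which
\[
\frac{1}{|\Psi_N|}\sum_{n\in\Psi_N}\delta_{\tilde T^n\tilde a}\xrightarrow{\,w^{\ast}\,}\tilde\mu.
\]
By construction $\tilde a\in\tilde X$, $\tilde a\in\gen(\tilde\mu,\Psi)$, $\tilde\pi(\tilde a)=a$, the map $\tilde\pi$ is continuous and equivariant, and $\tilde\pi_{*}\tilde\mu=\mu$ (since $a$ is $\mu$-generic along $\Psi$ as well). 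The key step is to show that $\tilde\mu$ coincides with the graph joining $\Theta_{*}\mu$, where $\Theta(x):=(x,\pi(x))$. Granted this, $\tilde\pi$ becomes a measure-theoretic isomorphism $(\tilde X,\tilde\mu,\tilde T)\to (X,\mu,T)$: in particular $\tilde\mu$ is ergodic, and the Kronecker factor of $(\tilde X,\tilde\mu,\tilde T)$ is again $(Z,m,R)$, now continuously realized by the second-coordinate projection $\tilde X\to Z$.

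\textbf{Main obstacle.} The substantive difficulty is this identification. Although the orbit of $\tilde a$ lies in the graph of $\pi$, the graph need not be closed, so a priori the support of $\tilde\mu$ is only contained in its closure, and $\tilde\mu$ could differ from the graph joining on a set of positive measure. To force $\tilde\mu$ onto the graph, I plan to test $\tilde\mu$ against the dense class of product functions $F(x)H(z)$ with $F\in C(X)$ and $H\in C(Z)$: for each $\epsilon>0$, Lusin's theorem yields a closed $X_\epsilon\subset X$ with $\mu(X_\epsilon)>1-\epsilon$ on which $\pi$ is continuous (chosen so that $\partial X_\epsilon$ is $\mu$-null), and Tietze's theorem extends $H\circ\pi|_{X_\epsilon}$ to a continuous function $H_\epsilon\in C(X)$; then the continuous function $F\cdot H_\epsilon$ equidistributes correctly under $a$ by genericity, while the difference $F\cdot(H\circ\pi-H_\epsilon)$ is supported on $X\setminus X_\epsilon$, whose orbit-frequency is controlled via a continuous majorant and the continuity-set identity $\frac{1}{|\Psi_N|}\sum\mathbf{1}_{X\setminus X_\epsilon}(T^n a)\to \mu(X\setminus X_\epsilon)<\epsilon$. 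Letting $\epsilon\to 0$ forces
\[
\int F(x)H(z)\,d\tilde\mu(x,z)=\int F(x)H(\pi(x))\,d\mu(x),
\]
which identifies $\tilde\mu=\Theta_{*}\mu$ and closes the argument.
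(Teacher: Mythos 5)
You should first note that the paper itself does not prove this proposition; it imports it wholesale from \cite{KMRR}, so your argument has to stand on its own, and it has a genuine gap at its very first step. You assert that $a\in\gen(\mu,\Phi)$ ``forces the $T$-orbit of $a$ into every $T$-invariant $\mu$-conull subset of $X$'', and from this deduce $\pi(T^na)=R^n\pi(a)$ for all $n$. That assertion is false: genericity only controls asymptotic averages of \emph{continuous} functions along the orbit, not membership of the orbit in conull sets. The orbit of $a$ is countable and is typically itself a $\mu$-null invariant set (already for an irrational circle rotation every point is generic, yet its orbit is an invariant null set, so its complement is an invariant conull set missing $a$); likewise a point can be generic for a measure carried by a proper closed invariant subset while lying outside it. Since the Kronecker factor map $\pi$ is only defined and equivariant $\mu$-almost everywhere, the value $z_0=\pi(a)$ is not even canonically defined --- different versions of $\pi$ disagree on null sets that may well contain the whole orbit of $a$ --- and no choice of version is guaranteed to satisfy $\pi(T^na)=R^nz_0$.

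This gap then propagates into the identification $\tilde\mu=\Theta_{*}\mu$, which is the real content of the statement. Your Lusin/Tietze computation controls averages of $F\cdot(H\circ\pi)$ along the orbit of $a$ (and even there one must be careful that $H\circ\pi$ is merely measurable, so only the indicator-of-continuity-set bound you sketch is available), but the empirical measures of $\tilde a=(a,z_0)$ involve $H(R^nz_0)$, and replacing $H(R^nz_0)$ by $H(\pi(T^na))$ is exactly the unjustified pointwise equivariance; without it the test-function computation says nothing about $\tilde\mu$. What is actually needed is the existence of \emph{some} $z_0\in Z$ and a sub-\Folner{} sequence $\Psi$ along which $(a,z_0)$ is generic for the graph joining $\int_X\delta_x\times\delta_{\pi(x)}\intd\mu(x)$; this is the heart of \cite[Proposition 3.20]{KMRR} and requires a genuine argument (for instance, forming limit joinings of $\mu$ and $m$ from the averages $\frac{1}{|\Phi_N|}\sum_{n\in\Phi_N}\delta_{T^na}\times\delta_{R^nz}$ and locating a good $z$), not the evaluation of $\pi$ at $a$. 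Your surrounding scaffolding --- taking the orbit closure in $X\times Z$, extracting $\Psi$ by compactness, and the observation that once $\tilde\mu$ is the graph joining the projection $\tilde\pi$ is an isomorphism mod $0$ and the second coordinate gives a continuous factor map onto the Kronecker factor --- is sound, but the core step is missing, so the proof as written does not go through.
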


With the help of \cref{lemma_continuouskroneckerenabler} we can reduce the proof of \cref{thm_maindynamical} to the following special case.

\begin{Theorem}
\label{thm_maindynamicalcontinuouskronecker}
Let $(X,\mu,T)$ be an ergodic system and assume there is a continuous factor map $\pi$ to its Kronecker.
Let $a\in\gen(\mu,\Phi)$ for some \Folner{} sequence $\Phi$, and let $E\subset X$ be a Borel set with
$\mu(E)>0$.
\begin{enumerate}
[label=(\roman{enumi}),ref=(\roman{enumi}),leftmargin=*]
\item\label{itm_maindynamicalcontinuouskronecker_i}
There exist $t\in\N$ and an \Erdos{} progression of the form $(a,x_1,x_2)\in X^3$ such that $x_1\in E$ and $T^tx_2\in E$.
\item\label{itm_maindynamicalcontinuouskronecker_ii}
There exist $t\in\N$ and an \Erdos{} progression of the form $(a,x_1,x_2)\in X^3$ such that $T^tx_1\in E$ and $T^tx_2\in E$.
\end{enumerate}
\end{Theorem}

   We remark that unlike in \cref{thm_maindynamical}, in the formulation of \cref{thm_maindynamicalcontinuouskronecker} we do not require that $E$ is an open set.
    In fact, this hypothesis is not needed in \cref{thm_maindynamical} either, but without assuming openness of $E$, \cref{thm_maindynamical} is no longer equivalent to \cref{thm_main}.

\begin{proof}[Proof that \cref{thm_maindynamicalcontinuouskronecker} implies \cref{thm_maindynamical}]

We only prove that part~\ref{itm_maindynamicalcontinuouskronecker_i} of \cref{thm_maindynamicalcontinuouskronecker} implies part~\ref{itm_maindynamical_i} of \cref{thm_maindynamical}.
Similar arguments show the implication between part~\ref{itm_maindynamicalcontinuouskronecker_ii} of \cref{thm_maindynamicalcontinuouskronecker} and part~\ref{itm_maindynamical_ii} of \cref{thm_maindynamical}.

Let $(X,\mu,T)$ be an ergodic system, let $a\in\gen(\mu,\Phi)$ for some F\o lner sequence $\Phi$ and let $E\subset X$ be open and have positive measure.
Let $(\tilde{X},\tilde{\mu},\tilde{T})$,  $\tilde{a}$ and $\tilde{\pi}$ result from an application of \cref{lemma_continuouskroneckerenabler} and let $\tilde E:=\tilde\pi^{-1}(E)\subset \tilde X$.
As $(\tilde{X},\tilde{\mu},\tilde{T})$ has a continuous factor map to its Kronecker factor, we can apply \cref{thm_maindynamicalcontinuouskronecker} to find $t\in\N$ and an \Erdos{} progression $(\tilde{a},\tilde{x}_1,\tilde{x}_2)\in\tilde X^3$ with $\tilde x_1\in\tilde E$ and $\tilde{x}_2\in T^{-t}\tilde{E}$.
It is then immediate that $(\tilde{\pi}(\tilde{a}),\tilde{\pi}(\tilde{x}_1),\tilde{\pi}(\tilde{x}_2))$ is an \Erdos{} progression in $X^3$ with $\tilde{\pi}(\tilde{x}_1)\in E$ and $\tilde{\pi}(\tilde{x}_2)\in T^{-t}(E)$.
\end{proof}

The proof of \cref{thm_maindynamicalcontinuouskronecker} is deferred to \cref{subsec_maindynamical} until after we have developed the necessary tools.
We conclude this section by recalling the continuous ergodic decomposition of the product measure $\mu \times \mu$ from~\cite{KMRR}. To do so we make use of the following standard disintegration result.

\begin{Theorem}[See {\cite[Theorem~5.14]{EW11}}]
\label{thm_cmd}
Given a measurable factor map $\pi\colon  X\to Y$ between systems $(X,\mu,T)$ and $(Y,\nu,S)$, there exists a measurable map $y \mapsto \mu_y$ defined on a full measure subset of $Y$ and taking values in the space $\M(X)$ of Borel probability measures on $X$ with the following properties.
\begin{enumerate}[label=(\roman{enumi}),ref=(\roman{enumi}),leftmargin=*]
\item\label{itm_cm_i}
For every bounded, measurable function $f\colon X\to \R$, the function
\[
y\mapsto \int_X f\d\mu_y
\]
is an almost everywhere defined and Borel measurable function on $Y$ satisfying
\[
\int_{D}\biggl(\int_X f\d\mu_y\biggr) \d\nu(y)=\int_{\pi^{-1}(D)} f\d\mu
\]
for all Borel sets $D\subseteq Y$.
\item\label{itm_cm_ii}
For $\nu$-almost every $y\in Y$, we have $\mu_y(\pi^{-1}(\{y\}))=1$.
\item\label{itm_cm_iii}
Properties \ref{itm_cm_i} and \ref{itm_cm_ii} uniquely determine the map $y\mapsto \mu_y$ in the sense that if $y\mapsto \mu_y'$ is another measurable map from $Y$ to $\mathcal{M}(X)$ with these properties, then $\mu_y=\mu_y'$ for $\nu$-almost every $y\in Y$.
\item\label{itm_cm_iv}
For almost every $y\in Y$, we have $T \mu_y = \mu_{S y}$.
\end{enumerate}
\end{Theorem}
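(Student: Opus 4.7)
The statement is the classical disintegration-of-measures theorem, and my plan is to derive it from the Riesz representation theorem together with the separability of $C(X)$ that comes from $X$ being a compact metric space.

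I would begin by fixing a countable $\Q$-linear subspace $\mathcal{D}\subset C(X)$ that is dense in $C(X)$ and contains the constant function $1$. For each $f\in\mathcal{D}$, the assignment $D\mapsto\int_{\pi^{-1}(D)} f\intd\mu$ on Borel subsets $D\subset Y$ is a signed measure absolutely continuous with respect to $\nu$, so the Radon--Nikod\'ym theorem produces a function $\hat{f}\in L^1(\nu)$ satisfying the integral identity of \ref{itm_cm_i} for this particular $f$. Choosing representatives and intersecting the countably many $\nu$-null sets indexed by $\mathcal{D}$, one obtains a conull set $Y_0\subset Y$ on which $f\mapsto \hat{f}(y)$ is a $\Q$-linear, monotone functional of operator norm one on $\mathcal{D}$. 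By uniform continuity and density it extends to a positive linear functional of norm one on all of $C(X)$, and the Riesz representation theorem then furnishes a Borel probability measure $\mu_y$ on $X$ for each $y\in Y_0$.

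To upgrade from $f\in\mathcal{D}$ to all bounded measurable $f$, and to confirm measurability of $y\mapsto\int f\intd\mu_y$, I would invoke a monotone class argument: the collection of bounded Borel $f$ for which $y\mapsto\int f\intd\mu_y$ is measurable and satisfies the integral identity forms a vector space closed under bounded monotone limits and containing $C(X)$, hence contains every bounded Borel $f$. This gives \ref{itm_cm_i}. For \ref{itm_cm_ii}, I would fix a countable base $\{U_n\}$ for the topology on $Y$ and observe that \ref{itm_cm_i}, applied to $\mathbf{1}_{\pi^{-1}(U_n)}$, forces $\mu_y(\pi^{-1}(U_n))=\mathbf{1}_{U_n}(y)$ off a further null set; intersecting over $n\in\N$ and using that $Y$ is Hausdorff then forces $\mu_y$ to be concentrated on $\pi^{-1}(\{y\})$. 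Property \ref{itm_cm_iii} is immediate: the characterizing identity in \ref{itm_cm_i}, tested against the countable family $\mathcal{D}$, pins down each integral $\int f\intd\mu_y$ for $\nu$-almost every $y$, and then monotone class gives agreement of $\mu_y$ and $\mu'_y$ a.e. For \ref{itm_cm_iv}, I would compare the disintegration $y\mapsto\mu_y$ with the alternative assignment $y\mapsto T_*\mu_{S^{-1}y}$; using $T$-invariance of $\mu$ and the intertwining $\pi\circ T=S\circ\pi$, this alternative also satisfies the defining identity of \ref{itm_cm_i}, so uniqueness yields $T\mu_y=\mu_{Sy}$ off a null set.

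The main obstacle is not any individual step but the careful bookkeeping of null sets: each $f\in\mathcal{D}$, each basic open $U_n$, and the equivariance relation each contribute their own $\nu$-null exceptional set, and the entire construction survives only because we are allowed to discard countably many such sets. Separability of $C(X)$, coming from $X$ being compact metric, is precisely what makes this feasible; without it, one would have to first identify $(Y,\nu)$ with a standard probability space and transport the argument across that isomorphism.
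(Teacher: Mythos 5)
The paper does not prove this statement at all; it is quoted verbatim as a known result with a pointer to Einsiedler--Ward \cite[Theorem~5.14]{EW11}. Your proposal is the standard construction of the disintegration and is essentially the argument given in that reference: Radon--Nikod\'ym applied to a countable dense $\Q$-linear subspace of $C(X)$, a common conull set on which the resulting functional is positive and normalized, Riesz representation to produce $\mu_y$, a monotone class argument for general bounded Borel $f$, and uniqueness applied to $y\mapsto T_*\mu_{S^{-1}y}$ to get the equivariance in \ref{itm_cm_iv}. The outline is correct; the only points worth making explicit are that your argument for \ref{itm_cm_ii} uses second countability of $Y$ (which holds here, since systems in this paper live on compact metric spaces, so $\pi^{-1}(Y\setminus\{y\})$ is a countable union of the sets $\pi^{-1}(U_n)$ with $y\notin U_n$), and that the claimed measurability of $y\mapsto\mu_y$ into $\M(X)$ should be recorded as following from the measurability of $y\mapsto\int f\intd\mu_y$ for $f$ in the countable dense family, since the weak* Borel structure on $\M(X)$ is generated by these evaluations.
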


Fix an ergodic system $(X,\mu,T)$.
Let $(Z,m,R)$ be its Kronecker factor, and assume that $\pi$ is a continuous factor map from $(X,\mu,T)$ to $(Z,m,R)$.
Also fix a disintegration $z\mapsto\eta_z$ of $\mu$ with respect to $\pi$.
As in~\cite[Equation (3.10)]{KMRR}, for every $(x_1,x_2) \in X\times X$ we define
\begin{equation}
\label{eq:lambda-defined}
\lambda_{(x_1,x_2)} = \int_Z \eta_{z + \pi(x_1)} \times \eta_{z + \pi(x_2)} \intd m(z)
\end{equation}
on $X\times X$.
Note that $\lambda_{(x_1,x_2)}$ does not depend on the choice of disintegration $z\mapsto\eta_z$.
The following properties are proved in \cite[Proposition 3.11]{KMRR}.
\begin{enumerate}
\item
The map $(x_1,x_2) \mapsto \lambda_{(x_1,x_2)}$ is continuous.
\item
The map $(x_1,x_2) \mapsto \lambda_{(x_1,x_2)}$ is a disintegration of $\mu \times \mu$ in the sense that
\[\int_{X\times X}\lambda_{(x_1,x_2)} \d(\mu\times\mu)(x_1,x_2)=\mu\times\mu
\]
holds.
\item
\label{item:generic-lambda}
For $(\mu \times \mu)$-almost every $(x_1,x_2)$, the point $(x_1,x_2)$ is generic for $\lambda_{(x_1,x_2)}$ and $\lambda_{(x_1,x_2)}$ is $T \times T$ ergodic.
\item
For every $(x_1,x_2) \in X\times X$, we have that
$\lambda_{(x_1,x_2)} = \lambda_{(Tx_1, Tx_2)}$.
\end{enumerate}

\subsection{The measure on \Erdos{} progressions}
\label{sec_measure_on_progressions}

In this section, we introduce a measure $\sigma$ on $X \times X$ that helps us study  \Erdos{} progressions beginning at a fixed point $a \in X$.

Fix an ergodic system $(X,\mu,T)$ and a point $a\in X$, let $(Z,m,R)$ denote its Kronecker factor, and further assume that there is a continuous factor map $\pi\colon X\to Z$.
Moreover, we fix a disintegration $z\mapsto\eta_z$ of $\mu$ over $\pi$ as guaranteed by \cref{thm_cmd}.

\begin{Definition}
\label{def_sigma}
We define the measure
\begin{equation}
\label{eq_triangularsigmas}
\sigma=\int_{Z}\eta_{z}\times\eta_{2z-\pi(a)}\intd m(z)=\int_{Z}\eta_{\pi(a)+z}\times\eta_{\pi(a)+2z}\d m(z)
\end{equation}
on $X \times X$.
\end{Definition}

For the remainder of this paper, we  use $\sigma$ to denote the measure defined by \eqref{eq_triangularsigmas}.
Note that the second equality in~\eqref{eq_triangularsigmas} follows from translation invariance of $m$.
We stress that $\sigma$ does not depend on the exact choice of disintegration $z\mapsto \eta_z$ since any two choices agree $m$-almost everywhere.

The motivation for this definition is that $\sigma$ is a relatively independent joining, putting as unbiased as possible a measure on the set of pairs $(x_1,x_2)\in X\times X$ such that
\[
\bigl((\pi(a),\pi(x_1),\pi(x_2)\bigr)
\]
forms a $3$-term arithmetic progression in $Z$.
The connection to three-term progressions is made apparent by the equality
\[
\pi(x_2) - \pi(x_1) = \pi(x_1) - \pi(a),
\]
which holds for $\sigma$-almost every $(x_1,x_2)$ and guarantees via~\eqref{eq:lambda-defined} that $\lambda_{(a,x_1)} = \lambda_{(x_1,x_2)}$.

 We conclude this section with some lemmas that are  of use in the next sections.

\begin{lemma}\label{lemma_projectionsofsigma}
Let $\pi_1\colon X\times X\to X$ denote the projection $(x_1,x_2)\mapsto x_1$ onto the first coordinate.
Then $\pi_1\sigma=\mu$.
\end{lemma}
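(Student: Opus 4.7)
The plan is to compute the pushforward $\pi_1 \sigma_s$ against an arbitrary continuous test function $f \in C(X)$ and show it agrees with $\int f \intd \mu$, which suffices since continuous functions separate Borel probability measures. Using the definition of $\sigma_s$ and Fubini's theorem, I would write
\[
\int_X f \intd (\pi_1 \sigma_s)
= \int_{X \times X} f(x_1) \intd \sigma_s(x_1,x_2)
= \int_Z \biggl( \int_X f \intd \eta_{\pi(s)+z} \biggr) \biggl( \int_X 1 \intd \eta_{\pi(s)+2z} \biggr) \intd m(z),
\]
and the inner integral of $1$ is simply $1$ since each $\eta_w$ is a probability measure by \cref{thm_cmd}\ref{itm_cm_ii}.

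Next, I would apply the translation invariance of the Haar measure $m$ on $Z$: substituting $w = \pi(s) + z$ yields
\[
\int_Z \int_X f \intd \eta_{\pi(s)+z} \intd m(z) = \int_Z \int_X f \intd \eta_w \intd m(w).
\]
By the definition of disintegration (\cref{thm_cmd}\ref{itm_cm_i}) applied with $D = Z$, the right-hand side equals $\int_X f \intd \mu$.

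Since this identity holds for all $f \in C(X)$ and $X$ is a compact metric space, we conclude $\pi_1 \sigma_s = \mu$. There is no real obstacle here; the computation is essentially an unfolding of the definition plus one use of the translation invariance of Haar measure. Note in particular that the answer does not depend on $s$ precisely because translation by $\pi(s)$ preserves $m$, which foreshadows why $\sigma_s$ is a natural object: the first marginal carries no information about $s$, while the full measure records the arithmetic progression structure through the coupling of $\eta_{\pi(s)+z}$ with $\eta_{\pi(s)+2z}$.
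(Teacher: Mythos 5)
Your proof is correct and follows essentially the same computation as the paper: test against $f\otimes 1$, unfold the defining formula for $\sigma_s$, and invoke the disintegration property to recover $\int_X f\intd\mu$. The only cosmetic difference is that the paper uses the first expression in \eqref{eq_triangularsigmas}, $\int_Z \eta_{z}\times\eta_{2z-\pi(s)}\intd m(z)$, so the first marginal is already $\int_Z\eta_z\intd m(z)=\mu$ without any change of variables, whereas you use the second expression and then apply translation invariance of the Haar measure---an equivalent, equally valid bookkeeping.
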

\begin{proof}
For any $f\in C(X)$,  we have
\begin{align*}
\int_{X} f \intd(\pi_1\sigma)
&=
\int_{X\times X} (f\otimes 1) \intd\sigma
\\
&=
\int_{Z} \biggl(\int_{X\times X} (f\otimes 1)
\d(\eta_{z}\times\eta_{2z-\pi(t)})\biggr) \intd m(z)
\\
&=
\int_{Z} \biggl(\int_{X} f
\d\eta_{z}\biggr) \intd m(z)
=
\int_{X} f
\d\mu,
\end{align*}
as desired.
\end{proof}

\begin{lemma}
\label{sigma_pi2_covers_mu}
Let $\pi_2\colon X\times X\to X$ denote the projection $(x_1,x_2)\mapsto x_2$ onto the second coordinate.
Then $\frac{1}{2}( \pi_2 \sigma + T \pi_2 \sigma) = \mu$.
\end{lemma}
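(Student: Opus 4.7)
My plan is to compute the two measures $\pi_2\sigma_s$ and $T\pi_2\sigma_s$ explicitly as integrals on the Kronecker factor, and then reduce the claim to a single Fourier identity on $Z$.

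Fix $s\in X$, set $\alpha:=\pi(s)$, and write the rotation on $Z$ as $R(z)=z+\beta$. Projecting \eqref{eq_triangularsigmas} onto the second coordinate gives
\[
\pi_2\sigma_s=\int_Z \eta_{\alpha+2z}\d m(z),
\]
and applying $T$ together with $T\eta_z=\eta_{z+\beta}$ (\cref{thm_cmd}\,\ref{itm_cm_iv}) yields
\[
T\pi_2\sigma_s=\int_Z \eta_{\alpha+\beta+2z}\d m(z).
\]
Pairing the desired identity against an arbitrary $f\in C(X)$ and setting $F(z):=\int_X f\d\eta_z$, I reduce the claim to the functional identity
\[
\tfrac12\int_Z F(\alpha+2z)\d m(z)+\tfrac12\int_Z F(\alpha+\beta+2z)\d m(z)=\int_Z F(z)\d m(z)
\]
on the Kronecker factor, to be verified for every continuous $F\colon Z\to\C$.

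By Stone--Weierstrass and linearity it suffices to check the identity when $F=\chi$ is a continuous character of $Z$. Using $\chi(2z)=\chi^2(z)$ and character orthogonality on $(Z,m)$, the two left-hand integrals evaluate to $\chi(\alpha)[\chi^2=1]$ and $\chi(\alpha)\chi(\beta)[\chi^2=1]$, so the claim collapses to
\[
\tfrac12\chi(\alpha)\bigl(1+\chi(\beta)\bigr)[\chi^2=1]=[\chi=1].
\]
Both sides agree trivially when $\chi=1$ or when $\chi^2\ne1$; the only substantive case is $\chi\ne1$ with $\chi^2=1$, where both sides must vanish. This is the heart of the argument and is where ergodicity enters: since $R$ is ergodic, $\beta$ is a topological generator of $Z$, so $\chi\mapsto\chi(\beta)$ is injective on $\widehat Z$; hence $\chi\ne1$ forces $\chi(\beta)\ne1$, and combined with $\chi(\beta)^2=1$ this forces $\chi(\beta)=-1$, killing the left-hand side. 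Conceptually the lemma records the fact that $2Z$ has index at most $2$ in $Z$ (the only obstruction to $z\mapsto 2z$ being Haar-preserving), and averaging $\pi_2\sigma_s$ with its $T$-translate reassembles the (at most two) cosets of $2Z$ in $Z$ to recover $\mu$ from its disintegration.
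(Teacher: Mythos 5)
Your argument is correct, but it verifies the lemma by a different mechanism than the paper. You reduce the claim to the identity $\tfrac12\int_Z F(\alpha+2z)\,\mathsf{d}m(z)+\tfrac12\int_Z F(\alpha+\beta+2z)\,\mathsf{d}m(z)=\int_Z F\,\mathsf{d}m$ and check it on characters: only $\chi$ with $\chi^2=1$ survive the doubling, and ergodicity (no nontrivial character fixes the rotation element) forces $\chi(\beta)=-1$ for such nontrivial $\chi$, so the average vanishes. The paper instead argues structurally: it introduces the closed subgroup $2Z$ with Haar measure $\xi$, asserts that ergodicity gives $Z=(2Z)+R(2Z)$ and $m=\tfrac12(\xi+R\xi)$, identifies $\pi_2\sigma_s$ as $\int\eta_u\,\mathsf{d}\xi$ or $\int\eta_u\,\mathsf{d}(R\xi)$ according to whether $\pi(s)\in 2Z$ or $\pi(s)\in R(2Z)$, and concludes using $T\eta_u=\eta_{Ru}$ and $R^2\xi=\xi$. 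Both proofs rest on the same underlying fact (which you note at the end) that $2Z$ has index at most $2$ in the monothetic group $Z$; your Fourier computation has the advantage of proving, rather than asserting, the content of the paper's claim that $m=\tfrac12(\xi+R\xi)$, while the paper's coset decomposition is shorter and makes the two pushforward measures geometrically visible as Haar measures on cosets of $2Z$.

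One point of wording to tighten: the function $F(z)=\int_X f\,\mathsf{d}\eta_z$ is only bounded Borel (the disintegration $z\mapsto\eta_z$ is merely measurable and defined $m$-almost everywhere), so the claim does not literally reduce to the identity ``for every continuous $F$.'' The fix is immediate: your character computation shows that the Borel measure $\tfrac12\bigl(A_*m+B_*m\bigr)$, where $A(z)=\alpha+2z$ and $B(z)=\alpha+\beta+2z$, agrees with $m$ on all continuous functions, hence equals $m$, and therefore the identity holds for all bounded Borel $F$, in particular for $F(z)=\int_X f\,\mathsf{d}\eta_z$. (The companion issue, that $T\eta_w=\eta_{Rw}$ and the definition of $\eta$ itself are only $m$-almost-everywhere statements being integrated against the pushforward of $m$ under doubling, is handled by the observation that this pushforward is absolutely continuous with respect to $m$ since $[Z:2Z]\leq 2$; the paper's proof relies on the same implicit fact, so this is not a defect specific to your argument.)
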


\begin{proof}
Denote by $2Z$ the subgroup $\{z+z : z \in Z \}$ and let $\xi$ denote its Haar measure.
Ergodicity of $R$ ensures that $Z=(2Z)\cup R(2Z)$ and that $m=\tfrac12(\xi+R\xi)$.
In particular, for each $s\in X$ there exists $w\in Z$ such that either $\pi(s)=2w$ or $\pi(s)=R(2w)$.
In the first case
\[
\pi_2 \sigma = \int_Z \eta_{2(w+z)} \intd m(z) = \int_Z \eta_{2z} \intd m(z) = \int_{2Z} \eta_u \intd \xi(u),
\]
and in the second
\[
\pi_2 \sigma = \int_Z \eta_{2(w+z) + \alpha} \intd m(z) = \int_Z \eta_{2z + \alpha} \intd m(z) = \int_{2Z + \alpha} \eta_u \intd (R\xi)(u).
\]
Since $T\eta_u=\eta_{Ru}$ and $R^2\xi=\xi$, it follows that in either case
\[
\frac{1}{2}( \pi_2 \sigma + T \pi_2 \sigma) =\int_Z\eta_z\d\tfrac12(\xi+R\xi)(z)= \mu. \quad \qedhere
\]
\end{proof}

\subsection{The support of the measure}
\label{sec_support}

We maintain the notation of \cref{sec_measure_on_progressions} and assume that $(X, \mu, T)$ is an ergodic system with Kronecker factor $(Z, m, R)$, continuous factor map $\pi\colon (X, \mu, T)\to (Z, m, R)$, the measures $\lambda_{x_1, x_2}$ are those defined in~\eqref{eq:lambda-defined}, and $\sigma$ denotes the measure defined in~\eqref{def_sigma}.  We continue to use the fixed disintegration $z\mapsto \eta_z$  of $\mu$ with respect to $\pi$.

\begin{lemma}
\label{lemma_samelambda}
For $\sigma$-almost every $(x_1,x_2)\in X\times X$, the measures $\lambda_{(a,x_1)}$ and $\lambda_{(x_1,x_2)}$ are equal.
\end{lemma}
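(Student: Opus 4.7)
The plan is to show that both $\lambda_{(s,x_1)}$ and $\lambda_{(x_1,x_2)}$ depend only on the \emph{difference} of the $Z$-coordinates of their base points, and that $\sigma_s$-typical pairs $(x_1,x_2)$ make these two differences coincide.

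First, I would rewrite $\lambda_{(x_1,x_2)}$ via a change of variables in the Haar integral \eqref{eq:lambda-defined}. Substituting $u=w+\pi(x_1)$ and using invariance of $m$ under translation by $\pi(x_1)$ on $Z$, one gets
\[
\lambda_{(x_1,x_2)} = \int_Z \eta_u \times \eta_{u + (\pi(x_2)-\pi(x_1))} \intd m(u),
\]
so $\lambda_{(x_1,x_2)}$ only depends on the difference $\pi(x_2)-\pi(x_1)\in Z$. An identical substitution gives
\[
\lambda_{(s,x_1)} = \int_Z \eta_u \times \eta_{u + (\pi(x_1)-\pi(s))} \intd m(u).
\]
Therefore, to prove $\lambda_{(s,x_1)}=\lambda_{(x_1,x_2)}$, it suffices to show that for $\sigma_s$-almost every $(x_1,x_2)$ one has $\pi(x_1)-\pi(s)=\pi(x_2)-\pi(x_1)$ in $Z$.

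Next, I would unpack the definition of $\sigma_s$. Writing
\[
\sigma_s=\int_Z \eta_{\pi(s)+z}\times \eta_{\pi(s)+2z}\intd m(z),
\]
property \ref{itm_cm_ii} of the disintegration applied to each factor guarantees that for $m$-almost every $z$, the measure $\eta_{\pi(s)+z}\times \eta_{\pi(s)+2z}$ is concentrated on $\pi^{-1}(\pi(s)+z)\times\pi^{-1}(\pi(s)+2z)$. Integrating over $z$, this shows that for $\sigma_s$-almost every $(x_1,x_2)$ there exists $z\in Z$ with $\pi(x_1)=\pi(s)+z$ and $\pi(x_2)=\pi(s)+2z$. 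Subtracting yields $\pi(x_1)-\pi(s)=z=\pi(x_2)-\pi(x_1)$, which combined with the first step gives the desired equality $\lambda_{(s,x_1)}=\lambda_{(x_1,x_2)}$.

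There is no serious obstacle here; the only point requiring a moment of care is that the disintegration's concentration property \ref{itm_cm_ii} holds only for $m$-almost every fiber, so the conclusion about $\pi(x_i)$ holds only on a $\sigma_s$-conull set rather than everywhere, which is exactly what the lemma asserts. The rest is just the observation that the Haar measure $m$ on the compact abelian group $Z$ is translation invariant.
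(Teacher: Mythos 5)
Your proposal is correct and follows essentially the same route as the paper: it uses property \ref{itm_cm_ii} of the disintegration to see that $\sigma_s$-almost every $(x_1,x_2)$ satisfies $\pi(x_1)-\pi(s)=\pi(x_2)-\pi(x_1)$, and then translation invariance of $m$ in \eqref{eq:lambda-defined} to conclude $\lambda_{(s,x_1)}=\lambda_{(x_1,x_2)}$. The explicit change of variables you perform is exactly the ``translation invariance of $m$'' step the paper invokes, so there is nothing substantively different to compare.
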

\begin{proof}
Consider the set
\[
P:=\{(x_1,x_2)\in X\times X
:
\pi(x_1)=\pi(a)+z,~\pi(x_2)=\pi(a)+2z \textup{ for some } z \in Z \}.
\]
Combining~\eqref{eq_triangularsigmas} and property \ref{itm_cm_ii} of \cref{thm_cmd} for the disintegration $z\mapsto\eta_z$, it follows that $\sigma(P)=1$ and each $(x_1,x_2)\in P$ satisfies
\[
\pi(x_2) -  \pi(x_1) =  \pi(x_1)- \pi(a).
\]
Thus we have $\lambda_{(x_1,x_2)} = \lambda_{(a,x_1)}$ by the defining formula~\eqref{eq:lambda-defined} and translation invariance of $m$.
\end{proof}

Let $\supp(\nu)$ denote the support of a Borel measure $\nu$ and let $\F(X)$ denote the family of closed, nonempty subsets of a given compact metric space $(X,d)$.
We endow $\F(X)$ with the Haudsorff metric $\Hau$, defined by
\[
\Hau(F,G) = \max \left\{ \sup_{x \in F} d(x,G),~ \sup_{y \in G} d(y,F) \right\}
\]
whenever $F,G\in\F(X)$.

\begin{lemma}
\label{lem_measurable}
Let $W$ be a compact metric space, $\M(W)$ the space of Borel probability measures on $W$ endowed with the weak* topology, and $\F(W)$ the space of closed, non-empty subsets of $W$ endowed with the Hausdorff metric.
\leavevmode
\begin{enumerate}
\item
\label{item:one-measurable}
The map $\nu \mapsto \supp(\nu)$ from $\M(W)$ to $\F(W)$ is Borel measurable.
\item
\label{item:two-measurable}
If $x\mapsto \rho_x$ is a measurable map from $W$ to $\M(W)$, then
$\{ x \in W : x \in \supp(\rho_{x})\}$ is a Borel set.
\end{enumerate}
\end{lemma}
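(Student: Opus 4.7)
For part~(1), the plan is to exploit the fact that, since $W$ is compact metric, $\F(W)$ is itself compact metric under the Hausdorff metric and its Borel $\sigma$-algebra is generated by the countable family of ``hit'' sets $\{F \in \F(W) : F \cap U_n \neq \emptyset\}$ as $\{U_n\}$ ranges over a countable base for the topology of $W$. I would therefore reduce the claim to showing that for every open $U \subseteq W$ the set $\{\nu \in \M(W) : \supp(\nu) \cap U \neq \emptyset\}$ is Borel in $\M(W)$.

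The key identity at the heart of the argument is
\[
\{\nu : \supp(\nu) \cap U \neq \emptyset\} = \{\nu : \nu(U) > 0\},
\]
which holds in both directions: any $x \in U \cap \supp(\nu)$ has $U$ as an open neighborhood of positive $\nu$-measure, and conversely $\nu(W \setminus \supp(\nu)) = 0$ forces any open set of positive measure to meet $\supp(\nu)$. The Portmanteau theorem then gives that $\nu \mapsto \nu(U)$ is lower semicontinuous for open $U$, so $\{\nu : \nu(U) > 0\}$ is even open in $\M(W)$, in particular Borel. This settles~(1).

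For part~(2), I would fix a countable base $\{U_n\}_{n \in \N}$ for $W$ and rewrite $x \in \supp(\rho_x)$ as the condition that every basic open neighborhood of $x$ has positive $\rho_x$-mass, giving
\[
\{x \in W : x \in \supp(\rho_x)\} = \bigcap_{n \in \N} \bigl( (W \setminus U_n) \cup \{x \in W : \rho_x(U_n) > 0\}\bigr).
\]
Each $W \setminus U_n$ is closed, and $\{x : \rho_x(U_n) > 0\}$ is Borel because $x \mapsto \rho_x$ is measurable and $\nu \mapsto \nu(U_n)$ is Borel measurable (again by Portmanteau). A countable intersection of Borel sets is Borel, which finishes~(2).

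The only point I expect to need any care is confirming the generation statement for the Borel $\sigma$-algebra on $\F(W)$ used at the start of~(1); this is standard for compact metric $W$ because the hit sets appear in a sub-basis for the Vietoris topology, and Vietoris coincides with Hausdorff in this setting. As an alternative route to~(2), one could observe that $\{(x, F) \in W \times \F(W) : x \in F\}$ is closed in the product topology and obtain the claim as the preimage of this closed (hence Borel) set under the measurable map $x \mapsto (x, \supp(\rho_x))$ provided by~(1).
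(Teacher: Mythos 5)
Your proposal is correct, but it proceeds differently from the paper. The paper disposes of part~(1) purely by citation (Theorems~17.14 and~18.9 and Lemma~17.5 of Aliprantis--Border), whereas you give a direct argument: reduce Borel-measurability of $\nu\mapsto\supp(\nu)$ to the hit sets $\{F: F\cap U_n\neq\emptyset\}$ for a countable base, use the identity $\{\nu:\supp(\nu)\cap U\neq\emptyset\}=\{\nu:\nu(U)>0\}$, and invoke lower semicontinuity of $\nu\mapsto\nu(U)$ to see these preimages are even open. For part~(2) the paper pulls back the closed set $\Omega=\{(F_1,F_2):F_1\cap F_2\neq\emptyset\}$ under the measurable map $x\mapsto(\{x\},\supp(\rho_x))$, i.e.\ it genuinely uses part~(1); your main route instead writes the set as $\bigcap_n\bigl((W\setminus U_n)\cup\{x:\rho_x(U_n)>0\}\bigr)$, which is elementary, bypasses part~(1) and the hyperspace $\F(W)$ altogether, and only needs measurability of $x\mapsto\rho_x(U_n)$; the alternative you sketch via the closed incidence set $\{(x,F):x\in F\}$ is essentially the paper's argument. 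What each buys: yours is self-contained and avoids the correspondence machinery, the paper's is shorter by citation and recycles part~(1). The one place you should be slightly more careful is the generation claim for the Borel $\sigma$-algebra of $(\F(W),\Hau)$: that the hit sets occur in a sub-basis of the Vietoris topology only shows they generate a sub-$\sigma$-algebra; to get the full Borel $\sigma$-algebra one also needs the miss sets $\{F:F\subset U\}$, which follow from the hit sets via the standard compactness trick $\{F:F\cap C\neq\emptyset\}=\bigcap_k\{F:F\cap B(C,1/k)\neq\emptyset\}$ for closed $C$ (or a citation to the Effros Borel structure coinciding with the Hausdorff-metric Borel structure on compact metric spaces). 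You flagged this point yourself, and it is indeed standard, so there is no gap, only a justification to spell out or reference.
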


\begin{proof}
\leavevmode
\begin{enumerate}
\item
Combining Theorem~17.14, Lemma~17.5, and Theorem~18.9 in~\cite{AB-book}, the result follows.
\item
The map $\psi_1(x) = \{x\}$ from $W$ to $\F(W)$ is continuous and hence measurable.
By part~\ref{item:one-measurable}, the map $\psi_2(x) = \supp(\rho_{x})$ from $W$ to $\F(W)$ is also measurable.
Thus $\psi(x) = (\psi_1(x),\psi_2(x))$ from $W$ to $\F(W) \times \F(W)$ is measurable.
The set
$
\Omega = \{ (F_1,F_2) \in \F(W) \times \F(W) : F_1 \cap F_2 \ne \emptyset \}
$
is closed, and therefore $\{ x \in W : x \in \supp(\rho_x) \} = \psi^{-1}(\Omega)$ is Borel.
\qedhere
\end{enumerate}
\end{proof}

\begin{lemma}\label{lemma_disintegrationsupport}
The disintegration $z \mapsto \eta_z$ satisfies $\mu(\{ x \in X : x \in \supp(\eta_{\pi(x)}) \}) = 1$.
\end{lemma}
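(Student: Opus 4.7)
The plan is to set $S = \{x \in X : x \in \supp(\eta_{\pi(x)})\}$ and prove $\mu(S) = 1$ by (i) checking that $S$ is Borel, (ii) showing that $\eta_z(S) = 1$ for $m$-almost every $z \in Z$, and (iii) integrating via the disintegration identity.

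For step (i), the map $x \mapsto \eta_{\pi(x)}$ from $X$ to $\M(X)$ is the composition of the continuous factor map $\pi\colon X\to Z$ with the measurable map $z\mapsto\eta_z$ supplied by \cref{thm_cmd}, so it is Borel measurable. Applying \cref{lem_measurable}\ref{item:two-measurable} with $W = X$ and $\rho_x = \eta_{\pi(x)}$ then yields that $S$ is a Borel subset of $X$.

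For step (ii), since $X$ is a compact metric space, every Borel probability measure on $X$ assigns full mass to its own support; in particular $\eta_z(\supp(\eta_z)) = 1$ for every $z$ in the domain of the disintegration. By property \ref{itm_cm_ii} of \cref{thm_cmd}, for $m$-almost every $z$ the measure $\eta_z$ is concentrated on the closed set $\pi^{-1}(\{z\})$, hence $\supp(\eta_z) \subset \pi^{-1}(\{z\})$. Any $x \in \supp(\eta_z)$ therefore satisfies $\pi(x) = z$, so $\eta_{\pi(x)} = \eta_z$ and $x \in \supp(\eta_{\pi(x)})$. This shows $\supp(\eta_z) \subset S$, and hence $\eta_z(S) = 1$.

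For step (iii), property \ref{itm_cm_i} of \cref{thm_cmd} applied to the indicator $\one_S$ and $D = Z$ gives
\[
\mu(S) \;=\; \int_Z \eta_z(S) \intd m(z) \;=\; 1.
\]
The only potentially subtle step is (i), the Borel measurability of $S$, which is precisely what \cref{lem_measurable} was designed to handle; once that is in hand, the rest follows directly from the defining properties of the disintegration together with the continuity of $\pi$ (used to ensure that $\pi^{-1}(\{z\})$ is closed and thus contains $\supp(\eta_z)$).
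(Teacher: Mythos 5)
Your proof is correct and follows essentially the same route as the paper: Borel measurability of the set via \cref{lem_measurable}\ref{item:two-measurable}, the inclusion $\supp(\eta_z)\subset\pi^{-1}(\{z\})$ from property \ref{itm_cm_ii} of \cref{thm_cmd} (using that $\pi^{-1}(\{z\})$ is closed), and then integrating $\eta_z(S)=1$ against $m$. Your spelling out of why $\supp(\eta_z)\subset S$ (via $\pi(x)=z$ forcing $\eta_{\pi(x)}=\eta_z$) just makes explicit a step the paper leaves implicit.
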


\begin{proof}
Write $G = \{ x \in X : x \in \supp(\eta_{\pi(x)}) \}$, which is Borel measurable by \cref{lem_measurable},  part~\ref{item:two-measurable}. Since
\[
\mu(G) = \int_Z  \eta_z(G) \intd m(z),
\]
it suffices to show $\eta_z(G)=1$ for $m$-almost every $z\in Z$.
By \cref{thm_cmd}, part~\ref{itm_cm_ii}, for almost every $z\in Z$ we have $\eta_z(\pi^{-1}(z))=1$. If $\eta_z(\pi^{-1}(z))=1$, then $\supp(\eta_z) \subset \pi^{-1}(z)$ because continuity of $\pi$ gives that $\pi^{-1}(z)$ is a closed set, and therefore it is a closed set of full measure.
Thus, for $m$-almost every $z\in Z$, we have $\supp(\eta_z) \subset \pi^{-1}(z)$ and hence $\supp(\eta_z)\subset G$. Since $\supp(\eta_z)\subset G$, we have $\eta_z(G) \ge \eta_z(\supp(\eta_z)) =1$ for $m$-almost every $z\in Z$.
\end{proof}

Write
\begin{equation}
\label{eqn_goodsets}
S=\big\{(x_1,x_2)\in X\times X: (x_1,x_2)\in\supp(\lambda_{(x_1,x_2)})\big\}
\end{equation}
and note that part~\ref{item:two-measurable} in \cref{lem_measurable}, together with continuity of $(x_1,x_2) \mapsto \lambda_{(x_1,x_2)}$, implies that $S$ is a  Borel subset of $X \times X$.
Our goal for the remainder of this section is to show that $\sigma(S)=1$ for every $s\in X$ (see \cref{prop:sigma-s-full}).

\begin{proposition}
\label{prop_almost_every_point_is_density_point}
Fix a system $(X,\mu,T)$ and a continuous factor map $\pi$ to its Kronecker factor $(Z,m,T)$.
Also fix a disintegration $z \mapsto \eta_z$ over its Kronecker factor $(Z,m,R)$.
There is a sequence $\delta(j) \to 0$ such that for almost every $x \in X$ the following holds: for every neighbourhood $U$ of $x$ we have
\begin{equation}
\label{eqn:density_point_sequence}
\lim_{j \to \infty} \frac{m\Big(\big\{z\in Z :  \eta_z(U) > 0\big\} \cap B\big(\pi(x),\delta(j)\big)\Big)}{m\Big(B\big(\pi(x),\delta(j)\big)\Big)}=1.
\end{equation}
\end{proposition}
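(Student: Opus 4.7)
My plan is to reduce the problem to a Lebesgue-style density statement on $Z$ applied to a countable collection of Borel sets, and then invoke the preceding a.e.\ support lemma. Fix a countable basis $\{V_k\}_{k \in \N}$ for the topology of $X$ and for each $k$ set
\[
A_k := \{z \in Z : \eta_z(V_k) > 0\} \subset Z,
\]
which is Borel by measurability of the disintegration. The key observation is that if $U$ is an open neighbourhood of a point $x \in X$ and $V_k \subset U$ contains $x$, then $A_k \subset \{z : \eta_z(U) > 0\}$, so a density-$1$ statement for $A_k$ at $\pi(x)$ transfers automatically to $\{z : \eta_z(U) > 0\}$.

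To extract the sequence $\delta(j)$, I would use $L^1$ convergence of averages on $Z$. For $\delta > 0$ define
\[
(M_\delta f)(z) := \frac{1}{m(B(z,\delta))} \int_{B(z,\delta)} f \intd m, \qquad f \in L^1(Z,m).
\]
Translation invariance of the metric makes $m(B(z,\delta))$ a function of $\delta$ alone, so a Fubini computation shows that $M_\delta$ is a contraction on $L^1(Z,m)$. For $f \in C(Z)$, uniform continuity forces $M_\delta f \to f$ uniformly; combined with the density of $C(Z)$ in $L^1(Z,m)$ and the contraction bound, this upgrades to $M_\delta f \to f$ in $L^1$ for every $f \in L^1(Z,m)$. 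Applied to each indicator $\mathbbm{1}_{A_k}$ and followed by a diagonal extraction of a.e.\ convergent subsequences, this produces a single sequence $\delta(j) \to 0$ together with Borel null sets $N_k \subset Z$ such that
\[
\lim_{j \to \infty} \frac{m(A_k \cap B(z,\delta(j)))}{m(B(z,\delta(j)))} = 1
\]
for every $k$ and every $z \in A_k \setminus N_k$.

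Now set $N := \pi^{-1}\bigl(\bigcup_k N_k\bigr) \cup \{x \in X : x \notin \supp(\eta_{\pi(x)})\}$. Since $m$ is the pushforward of $\mu$ under $\pi$, and by the previous lemma, $\mu(N) = 0$. For $x \notin N$ and any open neighbourhood $U$ of $x$, pick a basis element $V_k$ with $x \in V_k \subset U$; then $\eta_{\pi(x)}(V_k) > 0$ because $x \in \supp(\eta_{\pi(x)})$, which places $\pi(x)$ in $A_k \setminus N_k$. The display above and the inclusion $A_k \subset \{z : \eta_z(U) > 0\}$ then yield \eqref{eqn:density_point_sequence}.

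The principal (and relatively modest) technical obstacle is producing a single sequence $\delta(j)$ that simultaneously witnesses density-$1$ behaviour for the whole countable family $\{A_k\}$. Routing through $L^1$ convergence of $M_\delta$ handles this cleanly and bypasses any doubling hypothesis on the Haar measure $m$, which is important since $Z$ may be, for example, an infinite-dimensional torus for which the classical Lebesgue density theorem along all radii would be problematic.
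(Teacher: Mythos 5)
Your proof is correct, and it takes a genuinely different route from the paper's. The paper applies Lusin's theorem to the Borel map $z \mapsto \supp(\eta_z)$, viewed as a map into the space of closed subsets of $X$ with the Hausdorff metric: on closed sets $Z_j$ of measure at least $1-2^{-j}$ this map is uniformly continuous, which is what supplies $\delta(j)$; a Fubini/convolution estimate plus Borel--Cantelli then shows that almost every $z$ eventually lies in the set $K_j$ of points whose $\delta(j)$-ball meets $Z_j$ in proportion at least $1-1/j$, and the Hausdorff continuity of the supports converts this into \eqref{eqn:density_point_sequence} for every neighbourhood $U$ of any $x \in \supp(\eta_{\pi(x)})$. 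You instead fix a countable basis $\{V_k\}$, reduce to the countable family $A_k=\{z:\eta_z(V_k)>0\}$ via the monotonicity $A_k\subset\{z:\eta_z(U)>0\}$ whenever $V_k\subset U$, and get the density statement from the $L^1$-approximate-identity property of the ball averages $M_\delta$ on the compact group (the Fubini contraction bound plus uniform convergence on $C(Z)$), followed by a diagonal extraction of one sequence $\delta(j)$ giving a.e.\ convergence for all $k$ simultaneously. Both arguments rest on the preceding lemma that $\mu$-a.e.\ $x$ lies in $\supp(\eta_{\pi(x)})$, and both deliberately avoid invoking a Lebesgue density theorem on $Z$ (problematic along arbitrary radii when $Z$ is, say, an infinite-dimensional torus): the paper dodges it through Lusin sets and uniform continuity of the support map, while you dodge it by tailoring the radii $\delta(j)$ to a fixed countable family of sets. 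Your route is somewhat more elementary --- no hyperspace machinery or Lusin's theorem, only the measurability of $z\mapsto\eta_z(V_k)$, which comes directly from the disintegration theorem --- at the mild cost of introducing the basis and the exceptional null sets $N_k$ (plus the null sets where the disintegration is undefined), which you correctly aggregate into a single $\mu$-null set before quantifying over all neighbourhoods $U$.
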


\begin{proof}
Consider the map $\Phi\colon Z\to \F(X)$ given by $\Phi(z)=\supp(\eta_{z})$.
This map is Borel measurable by \cref{lem_measurable} as it is the composition of two Borel measurable functions $z\mapsto \eta_z$ and $\nu\mapsto \supp(\nu)$.
Applying Lusin's theorem~\cite[Theorem~12.8]{AB-book} for every $j\in\N$, there is a closed set $Z_j\subset Z$ with $m(Z_j)>1-2^{-j}$ such that $\Phi|_{Z_j}$ is continuous.
By uniform continuity of $\Phi|_{Z_j}$, there exists a positive number $\delta(j)$ such that for all $z_1,z_2\in Z_j$,
\[
d(z_1,z_2)\leq\delta(j)~\implies~ \Hau\big(\Phi(z_1),\Phi(z_2)\big)<\frac{1}{j}.
\]

Consider the set
\[
K_{j}=\left\{z\in Z_j: m\Big(B\big(z,\delta(j)\big)\cap Z_j\Big)> \left( 1 - \frac{1}{j} \right) m\Big(B\big(z,\delta(j)\big)\Big)\right\}
\]
for each $j\in \N$.
Define
\[
\chi_j(z)=
\frac{1}{m(B(0,\delta(j)))}\int_{Z_j} \one_{B(0,\delta(j))}(w-z)\d{}m(w)
\]
and note that $\chi_j(z)\leq 1$ for all $z\in Z$.
Since translations on $Z$ are isometries, we have
\begin{equation}
\label{eqn_k_ell_intersec}
K_j= Z_j\cap \left\{z\in Z: \chi_j(z)> \left( 1 - \frac{1}{j} \right)\right\}.
\end{equation}
Using Fubini's theorem, we deduce that
\begin{equation}\label{eq_weirdset1}
 \int_{Z} \chi_j(z)\d{}m(z)
=
m(Z_j)
>
1-\frac1{2^j},
\end{equation}
which combined with $\chi_j(z)\leq 1$ implies that
\begin{equation}\label{eq_weirdset2}
  m\left(\left\{z\in Z: \chi_j(z)> \left( 1 - \frac{1}{j} \right)\right\}\right)\geq1-\frac j{2^j}.
\end{equation}
Combining \eqref{eqn_k_ell_intersec} with \eqref{eq_weirdset1} and \eqref{eq_weirdset2},  it follows that $\sum_{j\in\N}m(Z\setminus K_{j})<\infty$.

Let
\[
K
=
\bigcup_{M\geq 1}~ \bigcap_{j\geq M} K_j.
\]
Observe that, by the Borel-Cantelli lemma, $m(K)=1$.
In view of \cref{lemma_disintegrationsupport}, this implies that the set $L := \{ x \in X : x \in \supp(\eta_{\pi(x)}) \} \cap \pi^{-1}(K)$ has $\mu(L)=1$.
To finish the proof it thus suffices to show that any $x\in L$ satisfies \eqref{eqn:density_point_sequence}.

Fix a point $x\in L$ and let $U$ be a neighborhood of $x$.
Let $z=\pi(x)$.
Since $z\in K$ and $U$ is open, there exists $j_0\in\N$ such that for all $j\geq j_0$ we have $z\in K_j$ and $B(x,1/j)\subset U$.
We claim that for all $j\geq j_0$, we have
\begin{equation}
\label{eqn_cl_010}
B\big(z,\delta(j)\big)\cap Z_j\subset H:=\big\{z\in Z: \eta_z(U)>0)\big\}.
\end{equation}
To verify this claim, let $z'\in B(z,\delta(j))\cap Z_j$ be arbitrary.
Since $\Hau(\Phi(z),\Phi(z'))<1/j$ and $x\in \Phi(z)$, there exists $x'\in \Phi(z')$ with $d(x,x')<1/j$.
From $d(x,x')<1/j$ it follows that $x'\in U$ and using $x'\in \Phi(z')$ we conclude $U\cap \Phi(z')\neq\emptyset$.
Since $\Phi(z')=\supp(\eta_{z'})$, it follows that $\eta_{z'}(U)>0$ and hence that $z'\in H$, proving that \eqref{eqn_cl_010} holds, as claimed.

Since $z\in K_j$, it follows from \eqref{eqn_cl_010} and the construction of $K_j$ that
\[
\frac{m\big(H\cap B(z,\delta(j))\big)}{m\big(B(z,\delta(j))\big)}
\geq
1 - \frac{1}{j}
\]
for all $j\geq j_0$.
We conclude that
\[
\lim_{j \to \infty} \frac{m\big(H\cap B(z,\delta(j))\big)}{m\big(B(z,\delta(j))\big)}=1
\]
and the proof is complete.
\end{proof}

\begin{proposition}
\label{prop:sigma-s-full}
The set $S$ defined in~\eqref{eqn_goodsets} satisfies $\sigma(S)=1$.
\end{proposition}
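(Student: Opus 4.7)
The plan is to combine the structural identity from \cref{lemma_samelambda} with the density-point property in \cref{prop_almost_every_point_is_density_point}. Since $\sigma_s$ is concentrated on pairs $(x_1,x_2)$ with $\pi(x_1)=\pi(s)+z$ and $\pi(x_2)=\pi(s)+2z$ for some $z\in Z$, \cref{lemma_samelambda} gives $\lambda_{(x_1,x_2)}=\lambda_{(s,x_1)}$ $\sigma_s$-almost surely, so it suffices to verify that $(x_1,x_2)\in\supp(\lambda_{(s,x_1)})$ for $\sigma_s$-a.e.\ pair.

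First I would cut down to a convenient full-$\sigma_s$-measure set of pairs. Let $G\subset X$ denote the $\mu$-conull set from \cref{prop_almost_every_point_is_density_point}, with accompanying sequence $\delta(j)\to 0$. \cref{lemma_projectionsofsigma} gives $\pi_1\sigma_s=\mu$, so $x_1\in G$ for $\sigma_s$-a.e.\ $(x_1,x_2)$, while \cref{sigma_pi2_covers_mu} yields $\pi_2\sigma_s\leq 2\mu$, hence $x_2\in G$ $\sigma_s$-a.s.\ as well. Discard pairs that fail either of these conditions, the structural identity above, or the conclusion of \cref{lemma_samelambda}.

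Next, I would fix such a pair $(x_1,x_2)$, set $w:=\pi(x_1)-\pi(s)=\pi(x_2)-\pi(x_1)$, and let $U\ni x_1$ and $V\ni x_2$ be arbitrary open neighborhoods. The change of variables $v=u+\pi(s)$ in the definition of $\lambda_{(s,x_1)}$ yields
\[
\lambda_{(s,x_1)}(U\times V)=\int_Z \eta_v(U)\,\eta_{v+w}(V)\intd m(v),
\]
which is positive precisely when $H_U\cap(H_V-w)$ has positive Haar measure, where $H_U:=\{v\in Z:\eta_v(U)>0\}$ and $H_V:=\{v\in Z:\eta_v(V)>0\}$. Applying \cref{prop_almost_every_point_is_density_point} to $x_1$ with neighborhood $U$ shows that $H_U$ has density tending to $1$ in $B(\pi(x_1),\delta(j))$, and applying it to $x_2$ with neighborhood $V$, combined with the translation invariance of $m$, shows that $H_V-w$ has density tending to $1$ in $B(\pi(x_2)-w,\delta(j))=B(\pi(x_1),\delta(j))$. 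For $j$ large enough both densities exceed $\tfrac12$ in this common ball, and inclusion-exclusion then forces $m\bigl(H_U\cap(H_V-w)\bigr)>0$.

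The step I expect to require the most care is assigning the second coordinate to $G$: because $\pi_2\sigma_s\neq\mu$ in general, the almost-sure density-point property at $x_2$ is not automatic, and the bound $\pi_2\sigma_s\leq 2\mu$ provided by \cref{sigma_pi2_covers_mu} is precisely what makes the argument go through. The remainder is a standard Lebesgue-density-type computation transported to the Kronecker factor via the continuous projection $\pi$.
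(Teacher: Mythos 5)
Your proposal is correct and follows essentially the same route as the paper's proof: both coordinates are placed in the density-point set via \cref{lemma_projectionsofsigma} and \cref{sigma_pi2_covers_mu} (your bound $\pi_2\sigma_s\leq 2\mu$ is just a repackaging of the paper's averaging identity), and positivity of $\lambda$ on product neighborhoods is obtained from the overlap of two sets of density greater than $\tfrac12$ in a common ball, exactly as in the paper (which uses $\tfrac34$). The only cosmetic difference is your detour through \cref{lemma_samelambda} to work with $\lambda_{(s,x_1)}$; the paper proves directly that $L\times L\subset S$ using $\lambda_{(x_1,x_2)}$, which is equivalent.
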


\begin{proof}
Apply \cref{prop_almost_every_point_is_density_point} to get a sequence $\delta(j) \to 0$ with the properties therein.
Let $L$ denote the set of points satisfying \eqref{eqn:density_point_sequence} which has full $\mu$-measure.
 We conclude from \cref{lemma_projectionsofsigma} that $\sigma(L\times X) = 1$ and conclude from \cref{sigma_pi2_covers_mu} that
\[
1 = \mu(L) = \dfrac{\sigma(X\times L) + \sigma(X\times T^{-1} L)}{2},
\]
whence $\sigma(X\times L) = 1$. Thus
\[
\sigma(L \times L) = \sigma( (X \times L) \cap (L \times X) ) = 1.
\]
To prove $\sigma(S) = 1$, it therefore suffices to show $L \times L \subset S$.

Let $(x_1,x_2)\in L\times L$. Let $U_1$ be a neighborhood of $x_1$ and let $U_2$ be a neighborhood of $x_2$. To show $(x_1,x_2)\in S$, we have to verify $\lambda_{(x_1,x_2)}(U_1\times U_2)>0$.
For convenience, write $\beta=\pi(x_2)-\pi(x_1)$. By definition,
\[
\lambda_{(x_1,x_2)}=\int_Z \eta_{z}\times\eta_{z+\beta}\d m (z).
\]
Since $x_1,x_2\in L$, there exists some $\delta>0$ such that
\begin{equation}
    \label{eq:first-coordinate}
\frac{m(\{z\in Z: \eta_z(U_1)>0)\}\cap B(\pi(x_1),\delta))}{m(B(\pi(x_1),\delta))}\geq \frac{3}{4}
\end{equation}
as well as
\begin{equation}
    \label{eq:second-coordinate}
\frac{m(\{z\in Z: \eta_z(U_2)>0)\}\cap B(\pi(x_2),\delta))}{m(B(\pi(x_2),\delta))}\geq \frac{3}{4}.
\end{equation}
Observe that $\{z\in Z: \eta_z(U_2)>0)\}-\beta=\{z\in Z: \eta_{z+\beta}(U_2)>0)\}$, and so \eqref{eq:second-coordinate} implies
\begin{equation}
    \label{eq:second-coordinate_shifted}
\frac{m(\{z\in Z: \eta_{z+\beta}(U_2)>0)\}\cap B(\pi(x_1),\delta))}{m(B(\pi(x_1),\delta))}\geq \frac{3}{4}.
\end{equation}

Define $W=\{z\in Z: \eta_z(U_1)>0~\text{and}~\eta_{z+\beta}(U_2)>0)\}$.
By~\eqref{eq:first-coordinate} and~\eqref{eq:second-coordinate_shifted} it follows that $W$ contains at least one-quarter of the ball $B(\pi(x_1),\delta)$, which implies $m(W)>0$.
Since for all $z\in W$ one has
\[
(\eta_z\times\eta_{z+\beta})(U_1\times U_2)>0.
\]
and $m(W)>0$, it follows that
$\lambda_{(x_1,x_2)}(U_1\times U_2)>0$ as desired.
\end{proof}

\subsection{Proof of \texorpdfstring{\cref{thm_maindynamicalcontinuouskronecker}}{main dynamical theorem}}
\label{subsec_maindynamical}

To prove \cref{thm_maindynamicalcontinuouskronecker} we need one further lemma.

\begin{lemma}[{\cite[Lemma 3.18]{KMRR}}]\label{lemma_optimistic}
Let $(X,\mu,T)$ be an ergodic system, let $a\in\gen(\mu,\Phi)$ for some \Folner{} sequence $\Phi$. Then there exists a \Folner{} sequence $\Psi$ such that for $\mu$-almost every $x_1\in X$ the point $(a,x_1)$ belongs to $\gen(\lambda_{(a,x_1)},\Psi)$.
\end{lemma}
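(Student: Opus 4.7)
My plan is to construct $\Psi$ as a sub\-sequence of $\Phi$ so that for $\mu$-a.e.\ $x_1$ the empirical measures $\tfrac{1}{|\Psi_N|}\sum_{n\in\Psi_N}\delta_{(T^na,T^nx_1)}$ converge weakly to $\lambda_{(a,x_1)}$. Since $C(X\times X)$ is separable and spanned by tensor products $F\otimes G$ with $F,G\in C(X)$, after a diagonal extraction it suffices to produce, for each pair $(F,G)$ in a countable dense family, a sub\-sequence along which
\[
\frac1{|\Psi_N|}\sum_{n\in\Psi_N}F(T^na)\,G(T^nx_1)\longrightarrow \int F\otimes G\intd\lambda_{(a,x_1)}\qquad(\mu\text{-a.e. }x_1).
\]
Any sub\-sequence of $\Phi$ remains \Folner{}, so $\Psi$ will automatically satisfy the \Folner{} condition. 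For each fixed pair, my approach is to establish $L^2(\mu)$-convergence of the averages to $\phi(x_1):=\int F\otimes G\intd\lambda_{(a,x_1)}$ and then pass to a sub\-sequence for pointwise a.e.\ convergence.

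For the main step, I decompose $F=\bar F+F'$ and $G=\bar G+G'$, where $\bar F=\tilde F\circ\pi$ and $\bar G=\tilde G\circ\pi$ are the Kronecker-conditional expectations (with $\tilde F,\tilde G\in L^2(Z,m)$) and $F',G'$ are orthogonal to the Kronecker subspace. The product splits into a main piece $\bar F(T^na)\bar G(T^nx_1)$ and three error pieces. Fourier-expanding $\tilde F=\sum c_\chi\chi$ and $\tilde G=\sum d_\psi\psi$ on $\widehat Z$, the main piece equals
\[
\sum_{\chi,\psi}c_\chi d_\psi\,\chi(\pi(a))\psi(\pi(x_1))\cdot\frac1{|\Phi_N|}\sum_{n\in\Phi_N}(\chi\psi)(\alpha)^n,
\]
the inner \Folner{} kernel tends to $\one[\chi\psi=\text{trivial}]$, and dominated convergence on $\widehat Z\times\widehat Z$ (using $\sum|c_\chi|^2,\sum|d_\psi|^2<\infty$) gives $L^2(\mu)$-convergence to $\int_Z\tilde F(z+\pi(a))\tilde G(z+\pi(x_1))\intd m(z)$, which equals $\phi(x_1)$ by~\eqref{eq:lambda-defined} and the identity $\int F\intd\eta_w=\tilde F(w)$.

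The three error pieces are handled via second-moment estimates in $x_1$. For instance,
\[
\int\Bigl|\tfrac{1}{|\Phi_N|}\sum_n \bar F(T^na)\,G'(T^nx_1)\Bigr|^2\intd\mu(x_1)=\frac1{|\Phi_N|^2}\sum_{n,m}\bar F(T^na)\overline{\bar F(T^ma)}\,\langle G',G'\circ T^{n-m}\rangle,
\]
and since the spectral measure of $G'\perp$ Kronecker is non-atomic, Wiener's lemma $\tfrac1H\sum_{|h|\leq H}|\langle G',G'\circ T^h\rangle|^2\to 0$ combined with Cauchy--Schwarz drives the $L^2$ norm to $o(1)$. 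The $F'\bar G$ and $F'G'$ pieces are treated by Fourier-expanding $\bar G$ and using that the empirical measures $\tfrac{1}{|\Phi_N|}\sum\delta_{(T^na,n\alpha)}$ on $X\times Z$ converge weakly to the graph joining $\tilde\mu=(x\mapsto(x,\pi(x)-\pi(a)))_*\mu$ (the unique $T\times R$-invariant probability measure on the closed orbit-variety $\{(y,z):z=\pi(y)-\pi(a)\}$ with $X$-marginal $\mu$), combined with the orthogonality $F'\perp$ Kronecker to cancel the corresponding Fourier modes.

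The main technical obstacle is that $\bar F$ and $\bar G$ are only $L^\infty$, not continuous, so $\gen(\mu,\Phi)$ does not directly control their orbit averages at $a$. The resolution, already threaded through the plan, is to conduct everything in the $L^2(\mu)$-sense in the $x_1$-variable: the only objects that enter the estimates are \Folner{} kernels $\tfrac{1}{|\Phi_N|}\sum\lambda^n$ on $\T$ and $L^2$ Fourier coefficients on $Z$, both controlled purely by the \Folner{} property and the ergodic structure of the Kronecker. A final diagonal sub\-sequence extraction over the countable dense family then upgrades $L^2$-convergence to simultaneous $\mu$-a.e.\ convergence for all test pairs, producing $\Psi$.
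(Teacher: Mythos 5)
The paper itself does not prove this lemma (it is quoted from \cite[Lemma 3.18]{KMRR}), so your argument must stand on its own, and it has a genuine gap at its core. Your decomposition evaluates $\bar F=\E(F\mid Z)$, $F'$, and the $L^2(Z,m)$-Fourier expansion of $\tilde F$ pointwise along the orbit of the fixed point $a$: the quantities $\bar F(T^na)$, $F'(T^na)$ and the identity $\bar F(T^na)=\sum_\chi c_\chi\chi(\pi(a))\chi(\alpha)^n$ are not defined, since these objects are only $\mu$- (resp.\ $m$-) almost everywhere defined and the orbit of $a$ is a null set. You flag exactly this as ``the main technical obstacle'', but the proposed resolution --- working in $L^2(\mu)$ in the $x_1$-variable --- does not touch it: $a$ is never integrated, and $\bar F(T^na)$, $F'(T^na)$ enter your main term and your second-moment estimate as fixed coefficients. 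The repair is to decompose only $G$ and leave $F$ intact: the structured part becomes $\sum_\psi d_\psi\bigl(\tfrac1{|\Phi_N|}\sum_{n\in\Phi_N}F(T^na)\psi(\alpha)^n\bigr)\,\psi\circ\pi$, and since $\psi(\alpha)^n=\psi(\pi(T^na))\,\overline{\psi(\pi(a))}$ --- here the standing hypothesis that the factor map $\pi$ to the Kronecker factor is \emph{continuous} is essential, and you never invoke it --- genericity of $a$ along $\Phi$ applied to the continuous functions $F\cdot(\psi\circ\pi)$ identifies each coefficient's limit, and orthonormality of the characters in $L^2(\mu)$ sums these limits to $\int F\otimes G\intd\lambda_{(a,x_1)}$. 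This also removes the need for your $F'\bar G$, $F'G'$ and ``graph joining'' discussion, whose justification again requires continuity of $\pi$ and well-defined values $F'(T^na)$.

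Two further steps are not justified as written. For the term with $G'$, Wiener's lemma gives $\tfrac1{2H+1}\sum_{|h|\le H}|\langle U^hG',G'\rangle|^2\to0$ over centred intervals, which does not directly bound $\tfrac1{|\Phi_N|^2}\sum_{n,m\in\Phi_N}|\langle U^{n-m}G',G'\rangle|$ for an \emph{arbitrary} \Folner{} sequence (the difference pattern of $\Phi_N$ need not resemble an interval); you need the van der Corput lemma for \Folner{} averages applied to $u_n=F(T^na)\,G'\circ T^n$, or a translation-uniform version of Wiener's lemma combined with the fact that \Folner{} sets are mostly unions of long runs. And ``dominated convergence on $\widehat Z\times\widehat Z$'' is not available from $\sum|c_\chi|^2,\sum|d_\psi|^2<\infty$ alone, since $\sum_{\chi,\psi}|c_\chi d_\psi|$ may diverge; the interchange of limit and sum must be done through orthogonality, as above. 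Your overall architecture --- prove $L^2(\mu)$-convergence of the pair-correlation averages for a countable dense family of pairs $F\otimes G$, then extract a subsequence of $\Phi$ and diagonalise to obtain $\Psi$ with $\mu$-a.e.\ genericity --- is sound and matches the strategy of the cited proof, but the defects above, especially the pointwise use of conditional expectations at $a$, must be repaired for the argument to be correct.
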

\begin{proof}
From property~\eqref{item:generic-lambda} after the definition of $\lambda_{(x,y)}$ in~\eqref{eq:lambda-defined} and Fubini's theorem, there exists (a full measure set of) $b\in\supp(\mu)$ such that for $\mu$-almost every $x\in X$, the point $(b,x)$ is generic for $\lambda_{(b,x)}$ with respect to the F\o lner sequence $(\{1,\dots,N\})_{N\in\N}$.
    Let $(G_j)_{j=1}^\infty$ enumerate a countable dense subset of $C(X\times X)$ and, for each $j\in\N$, let $\tilde G_j(x,y)=\int_{X^2}G_j\d\lambda_{(x,y)}$.
    Since the map $(x_1,x_2)\mapsto\lambda_{(x_1,x_2)}$ is continuous and $(T\times T)$-invariant, each of the functions $\tilde G_j$ is also continuous and $(T\times T)$-invariant.

    Since $a\in\gen(\mu,\Phi)$ and $b\in\supp(\mu)$, for every $m\in\N$, there exists $s(m)\in\N$ such that
    $\|G_j(b,\cdot)-G_j(T^{s(m)}a,\cdot)\|_\infty<2^{-m}$ for every $j\leq m$ and $\|\tilde G_j(b,\cdot)-\tilde G_j(T^{s(m)}a,\cdot)\|_\infty<{2^{-m}}$ for every $j\leq m$.
    Since $(b,x)$ is generic for $\lambda_{(b,x)}$ for $\mu$-almost every $x\in X$, for each $m\in\N$, there exists some $N(m)\in\N$ for which the continuous function
    $$F_{m}(x):=\max_{1\leq j\leq m}\left|\frac1{N(m)}\sum_{n=1}^{N(m)}G_j(T^nb,T^nx)-\tilde G_j(b,x)\right|$$
    satisfies
    $\|F_m\|_{L^1(\mu)}<1/2^m$.
    The choice of $s(m)$ implies that
    $$\tilde F_{m}(x):=\max_{1\leq j\leq m}\left|\frac1{N(m)}\sum_{n=1}^{N(m)}G_j(T^{n+s(m)}a,T^nx)-\tilde G_j(T^{s(m)}a,x)\right|$$
    satisfies
    $\|\tilde F_m\|_{L^1(\mu)}<3/2^m$.
    Letting $\Psi_m=\{s(m)+1,\dots,s(m)+N(m)\}$ and using $T\times T$ invariance of $\tilde G_j$ we deduce that
    $$F_m'(x):=\tilde F_m(T^{s(m)}x)=\max_{1\leq j\leq m}\left|\frac1{|\Psi_m|}\sum_{n\in\Psi_m}G_j(T^{n}a,T^nx)-\tilde G_j(a,x)\right|.$$
    Since $\mu$ is $T$-invariant, $\|F'_m\|_{L^1(\mu)}=\|\tilde F_m\|_{L^1(\mu)}<3/2^m$ for every $m\in\N$, hence the function $F(x):=\sum_{m\in\N}F_m'(x)$ has norm $\|F\|_{L^1(\mu)}=\sum\|F'_m\|_{L^1(\mu)}<\infty$ and is therefore finite almost everywhere.
    For every point $x_1\in X$ for which $F(x_1)<\infty$, we have that $F_m'(x_1)\to0$ as $m\to\infty$ and hence $(a,x_1)\in\gen(\lambda_{(a,x_1)},\Psi)$.
\end{proof}

\begin{proof}[Proof of \cref{thm_maindynamicalcontinuouskronecker}]
Fix a system $(X,\mu,T)$, $a \in \gen(\mu,\Phi)$ for some \Folner{} sequence $\Phi$, and $E \subset X$ open with $\mu(E) > 0$.
Assume $(X,\mu,T)$ has a continuous factor map $\pi$ to its Kronecker factor $(Z,m,R)$.
Applying \cref{lemma_optimistic} it follows that for $\mu$-almost every $x_1\in X$, we have $(a,x_1)\in\gen(\lambda_{(a,x_1)},\Psi)$ for some \Folner{} sequence $\Psi$.
Since, in view of \cref{lemma_projectionsofsigma}, the projection of $\sigma$ onto the first coordinate equals $\mu$, it follows that for $\sigma$-almost every $(x_1,x_2)\in X\times X$ we have $(a,x_1)\in\gen(\lambda_{(a,x_1)},\Psi)$.
By \cref{prop:sigma-s-full},  $\sigma$-almost every $(x_1,x_2)\in X\times X$ also has the property that $(x_1,x_2)\in\supp(\lambda_{(x_1,x_2)})$.
Using \cref{lemma_samelambda} it follows that $\sigma$-almost every $(x_1,x_2)\in X\times X$ satisfies $\lambda_{(x_1,x_2)}=\lambda_{(a,x_1)}$.
We conclude that $\sigma$-almost every $(x_1,x_2)$ satisfies both of the following properties (matching Properties 1 and 2 in \cref{sec:outline-of-proof}):
\begin{itemize}
\item
$(a,x_1)\in\gen(\lambda_{(a,x_1)},\Psi)$,
\item
$(x_1,x_2)\in\supp(\lambda_{(a,x_1)})$.

\end{itemize}
Since orbits of generic points are dense in the support (see, eg., \cite[Lemma 2.4]{KMRR}), we deduce that for $\sigma$-almost every $(x_1,x_2)\in X^2$, the point $(a,x_1,x_2)\in X^3$ is an \Erdos{} progression.

To finish the proof, note that if $(a,x_1,x_2)\in X^3$ is an \Erdos{} progression then $(a,x_1,x_2)$ satisfies the conclusion of \cref{thm_maindynamicalcontinuouskronecker}, part~\ref{itm_maindynamicalcontinuouskronecker_i}, if and only if
\[
(x_1,x_2)\in E\times T^{-t}E
\]
for some $t\in\N$, whereas
$(a,x_1,x_2)$ satisfies the conclusion of part~\ref{itm_maindynamicalcontinuouskronecker_ii} if and only if
\[
(x_1,x_2)\in (T\times T)^{-t}(E\times E)
\]
for some $t\in\N$.
Therefore, the proof is complete once we verify that
\begin{equation}
    \label{eq:positive}
\sigma\biggl(E\times \biggl(\bigcup_{t\in\N}T^{-t}E\biggr)\biggr)>0 { \qquad\text{and}\qquad \sigma\left(\bigcup_{t\in\N} (T\times T)^{-t} (E\times E)\right)>0.}
\end{equation}
Since $\mu$ is ergodic and $E$ has positive measure, the union  $\bigcup_{t\in\N}T^{-t}E$ covers all of $X$ up to a set of measure $0$ (with respect to $\mu$).
Writing
$$E\times \biggl(\bigcup_{t\in\N}T^{-t}E\biggr)
=
\big(E\times X\big)\cap \left(X\times \biggl(\bigcup_{t\in\N}T^{-t}E\biggr)\right),$$ we use \cref{sigma_pi2_covers_mu} and then \cref{lemma_projectionsofsigma} to obtain
\begin{align*}
\sigma\biggl(E\times \biggl(\bigcup_{t\in\N}T^{-t}E\biggr)\biggr)
=
\sigma(E\times X)
=
\mu(E)
>
0,
\end{align*}
as desired.

We are left with showing the second positivity statement in~\eqref{eq:positive}.
 Write $u=2t$ when $u$ is even and $u=2t+1$ when $u$ is odd.
Since $\sigma$ is $(T\times T^2)$-invariant, $\sigma\left((T\times T)^{-2t}(E\times E)\right)=\sigma(T^{-t}E\times E)$ and  $\sigma\left((T\times T)^{-2t-1}(E\times E)\right)=\sigma(T^{-t-1}E\times T^{-1}E)$.
On the other hand,
$$\sigma\left(\bigcup_{t\in\N}T^{-t}E\times E\ \cup\ \bigcup_{t\in\N}T^{-t-1}E\times T^{-1}E\right)=\sigma\big(X\times (E\cup T^{-1}E)\big)>0.$$
    Therefore, for some $u\in\N$ we have that $\sigma\left((T\times T)^{-u}(E\times E)\right)>0$.
\end{proof}

\bigskip
\footnotesize
\noindent
Bryna Kra \\
\textsc{Northwestern University}\par\nopagebreak
\noindent
\href{mailto:kra@math.northwestern.edu}
{\texttt{kra@math.northwestern.edu}}

\bigskip
\footnotesize
\noindent
Joel Moreira\\
\textsc{University of Warwick} \par\nopagebreak
\noindent
\href{mailto:joel.moreira@warwick.ac.uk}
{\texttt{joel.moreira@warwick.ac.uk}}

\bigskip
\footnotesize
\noindent
Florian K.\ Richter\\
\textsc{{\'E}cole Polytechnique F{\'e}d{\'e}rale de Lausanne} (EPFL)\par\nopagebreak
\noindent
\href{mailto:f.richter@epfl.ch}
{\texttt{f.richter@epfl.ch}}

\bigskip
\footnotesize
\noindent
Donald Robertson\\
\textsc{University of Manchester} \par\nopagebreak
\noindent
\href{mailto:donald.robertson@manchester.ac.uk}
{\texttt{donald.robertson@manchester.ac.uk}}

\end{document}